\newcommand{\RR}{\mathbb{R}}
\newcommand{\XX}{\mathcal{X}}
\newcommand{\PSD}{\mathcal{S}_+}
\newcommand{\rank}{\textup{rank}\,}
\newcommand{\rankplus}{\textup{rank}_+\,}
\newcommand{\rankpsd}{\textup{rank}_{\textup{psd}}\,}
\newcommand{\diag}{\textup{diag}}
\newcommand{\sqrtrank}{\textup{rank}_{\! \! {\sqrt{\ }}}\,}
\newcommand{\STAB}{\textup{STAB}}
\newcommand{\xx}{\mathbf x}
\newtheorem{theorem}{Theorem}[section]
\newtheorem{lemma}[theorem]{Lemma}
\newtheorem{corollary}[theorem]{Corollary}
\newtheorem{proposition}[theorem]{Proposition}
\theoremstyle{definition}
\newtheorem{definition}[theorem]{Definition}
\newtheorem{example}[theorem]{Example}
\theoremstyle{remark}
\newtheorem{remark}[theorem]{Remark}
\title{Polytopes of Minimum Positive Semidefinite Rank}
\author{Jo{\~a}o Gouveia}
\address{CMUC, Department of Mathematics,
  University of Coimbra, 3001-454 Coimbra, Portugal}
\email{jgouveia@mat.uc.pt} 
\author{Richard Z. Robinson}
\address{Department of Mathematics, University of Washington, Box
  354350, Seattle, WA 98195, USA} \email{rzr@uw.edu}
\author{Rekha R. Thomas}
\address{Department of Mathematics, University of Washington, Box
  354350, Seattle, WA 98195, USA} \email{rrthomas@uw.edu}
\thanks{Gouveia was partially supported by the Centre for Mathematics at the University of Coimbra and Fundac\~ao para a Ci\^encia e a
Tecnologia, through the European program COMPETE/FEDER, Robinson by 
the U.S. National Science Foundation Graduate Research Fellowship under Grant No. DGE-0718124, and Thomas by 
the U.S. National Science Foundation grant DMS-1115293.}
\begin{document}

\begin{abstract}
The positive semidefinite (psd) rank of a polytope is the smallest $k$ for which the cone of $k \times k$ real symmetric psd matrices 
admits an affine slice that projects onto the polytope. In this paper we show that the psd rank of a polytope is at least the dimension of the polytope plus one, and we characterize those polytopes whose psd rank equals this lower bound. We give several classes of polytopes that achieve the minimum possible psd rank including a complete characterization in dimensions two and three.
\end{abstract}

\maketitle

\section{Introduction}

Efficient representations of polytopes are of fundamental importance in contexts such as linear optimization
where the complexity of many algorithms depends on the size of the representation. A standard idea to find a compact description of a complicated polytope $P \subset \RR^n$ is to look for a simpler convex set of 
higher dimension  that has $P$ as a linear image of it.
Affine slices of closed convex cones offer a rich source of convex sets and the following definition was introduced in \cite{GPT2}. 

\begin{definition} \label{def:K-lift}
Let $P \subset \RR^n$ be a polytope. If $K \subset \RR^m$ is a closed convex cone, $L$ an affine space in $\RR^m$, and $\pi \,:\, \RR^m \rightarrow \RR^n$ a linear map such that $P = \pi(K \cap L)$, then we say that $K \cap L$ is a $K$-{\em lift} of $P$.
\end{definition}

If linear optimization over affine slices of $K$ admits efficient algorithms, then often, linear optimization over $P$ can be done rapidly as well. Well studied cones in this context are 
nonnegative orthants and the cones of real symmetric positive semidefinite (psd) matrices. We will denote the $m$-dimensional nonnegative orthant by $\RR^m_+$ and the cone of $m \times m$ psd matrices by $\PSD^m$. 
Affine slices of $\RR^m_+$ are polyhedra over which linear optimization can be done efficiently via {\em linear programming}. Affine slices of $\PSD^m$ are called {\em spectrahedra}, and linear optimization over them can be done efficiently via {\em semidefinite programming}. Recall that $\RR^m_+$ embeds into $\PSD^m$ via diagonal matrices and hence, polyhedra are special cases of spectrahedra, and semidefinite programming generalizes linear programming. 

There are many families of polytopes in $\RR^n$ with exponentially many facets (in $n$) that admit small (polynomial in $n$) polyhedral or spectrahedral lifts. 
Examples are the {\em parity} and {\em spanning tree polytopes} \cite{Yannakakis}, the {\em permutahedron} \cite{Goemans} and the {\em stable set polytope} of a {\em perfect graph} \cite{LovaszSchrijver91}. When the lifts come from families of cones such as $\{\RR^m_+\}$ or $\{ \PSD^m \}$, it is useful to determine the smallest cone in the family that admits a lift of the polytope. This allows the notion of {\em cone rank} of a polytope with respect to a family of cones \cite{GPT2}. We recall the definitions needed in this paper.

\begin{definition} \label{def:ranks} \cite{GPT2}
\begin{enumerate}
\item The {\em nonnegative rank} of a polytope $P \subset \RR^n$, denoted as $\rankplus P$, is the smallest $k$ such that $P$ has an $\RR^k_+$-lift.
\item The {\em positive semidefinite rank} of a polytope $P \subset  \RR^n$, denoted as $\rankpsd P$,  is the smallest $k$ such that $P$ has an $\PSD^k$-lift.
\end{enumerate}
\end{definition}

To describe our results, we need the following further definitions.

\begin{definition} \label{def:slack matrix} \cite{Yannakakis}
Let $P$ be a full-dimensional polytope in $\RR^n$ with vertex set 
$\{p_1,\ldots,p_v\}$ and an irredundant (facet) inequality representation
\[
P=\left\{ x \in \RR^n :  \beta_1 - \langle a_1, x \rangle  \geq 0 , \ldots, \beta_f - \langle a_f, x \rangle  \geq 0 \right\}
\]
where $\beta_j \in \RR$ and $a_j \in \RR^n$.
Then the nonnegative matrix  in $\RR^{v \times f}$ whose $(i,j)$-entry
is $\beta_j - \langle a_j, p_i \rangle$ is called a {\em slack matrix of $P$}. 
\end{definition}

Recall that the {\em polar dual} of a cone $K \subset \RR^m$ is the cone $$K^* := \{ y \in \RR^m \,:\, \langle x, y \rangle \geq 0 \,\,\,\forall \,\,\, x \in K \}.$$ 
In the vector space of $m \times m$ symmetric matrices we use the trace inner product $\langle A, B \rangle = \textup{Tr}(AB)$. Both $\PSD^k$ and $\RR^k_+$ are {\em self dual} cones, meaning that $K^*=K$, and we will identify them with their polar duals in what follows.  The notion of {\em cone factorizations} of slack matrices plays a central role in the theory of cone lifts of polytopes.

\begin{definition} \label{def:K-factorization} \cite{GPT2}
Let $M = (M_{ij}) \in \RR_+^{p \times q}$ be a nonnegative matrix and $K$ a
closed convex cone whose polar dual is $K^*$.
\begin{itemize}
\item  A $K$-{\em factorization} of $M$ is a pair of
ordered sets $a^1, \ldots, a^p \in K$ and $b^1, \ldots, b^q \in K^*$ (called {\em factors})
such that $\langle a^i, b^j \rangle = M_{ij}$. 
\item When $K = \RR^m_+$ (respectively, $\PSD^m$), a $K$-factorization of $M$ is called a  
{\em nonnegative} (respectively, {\em psd}) {\em factorization} of $M$.
\item The smallest $k$ for which $M$ has an $\RR^k_+$-factorization (respectively, $\PSD^k$-factorization) 
is called the {\em nonnegative rank} (respectively, {\em psd rank}) of $M$.  We denote these invariants of $M$ as $\rankplus M$ and 
$\rankpsd M$. 
\end{itemize}
\end{definition}

Any positive scaling of a facet inequality of a polytope $P$ can be used in Definition~\ref{def:slack matrix} and so the slack matrix of $P$ is only defined up to positive scalings of its columns. 
We denote any such slack matrix of $P$ by $S_P$. Since scaling rows or columns of a matrix $M$ by arbitrary positive real numbers does not affect the existence of a $K$-factorization of $M$, all slack matrices of $P$ will have the same behavior with respect to $K$-factorizations and, in particular, have the same nonnegative (respectively, psd) rank. 

In what follows, $P \subset \RR^n$ is always an $n$-dimensional polytope. 
Yannakakis showed in \cite{Yannakakis} that $\rankplus P = \rankplus S_P$ by proving that $P$ has an $\RR^k_+$-lift if and only if $S_P$ has an $\RR^k_+$-factorization. 
The nonnegative rank of a polytope has been the subject of many recent papers \cite{FKPT, FMPTW, FioriniRothvossTiwary, GillisGlineur, KaibelPashkovich}. The psd rank of a {\em convex set} $C \subset \RR^n$ was introduced in \cite{GPT2} where Yannakakis' theorem was generalized (Theorem 2.4 \cite{GPT2}). Specializing to polytopes, this theorem says that $P$ has a $K$-lift (in particular, $\PSD^k$-lift)
 if and only if $S_P$ has a $K$-factorization ($\PSD^k$-factorization), and so, $\rankpsd P = \rankpsd S_P$.  (The extension of Yannakakis' theorem in the case of polytopes also appeared in \cite{FMPTW}.) Since $\RR^k_+$ embeds into $\PSD^k$ for each $k$, we always have $\rankpsd P \leq \rankplus P$. It is easy to see that $\rankplus P \geq \rank S_P = n+1$. In Proposition~\ref{prop:lower bound on psd rank} we 
 show that $\rankpsd P$ is also at least $n+1$. This is not immediate since for a general nonnegative matrix $M$, $\rank M$ is not a lower bound for $\rankpsd M$, and the correct relationship is that $\frac{1}{2}(\sqrt{1 + 8 \rank M}-1) \leq \rankpsd M$ \cite{GPT2}.
Theorem~\ref{thm:psdrank n+1} characterizes those $n$-polytopes whose psd rank equals $n+1$, and we give several families of $n$-dimensional polytopes whose psd rank equals this lower bound.

We now recall a few useful facts about nonnegative and psd ranks of polytopes that will be needed in this paper. It follows from \cite[Prop.~2]{GPT2} that $\rankplus P$ and $\rankpsd P$ are invariant under projective (and hence also, affine) transformations of $P$. Further, transposing a matrix $M$ does not effect the existence of a $K$-factorization of $M$ if $K$ is self-dual. Therefore, if $P$ contains the origin in its interior, its {\em polar} polytope is 
$P^\circ := \{ y \in \RR^n \,:\, \langle x, y \rangle \leq 1 \,\,\forall \,\, x \in P \}$, and $\rankplus P = \rankplus P^\circ$ and $\rankpsd P = \rankpsd P^\circ$ since we can obtain a slack matrix of $P^\circ$ by transposing a slack matrix of $P$ and rescaling rows. It is common to define the slack matrix of a polytope using any inequality description of the polytope, including redundant inequalities. This will not affect the nonnegative or psd rank of the polytope. However, since some of our results will become more cumbersome to state using this more general definition of a slack matrix, we restrict ourselves to Definition~\ref{def:slack matrix}.

The psd rank of a polytope $P$ quantifies the power of semidefinite programming to provide efficient algorithms for linear optimization over $P$. For example, the stable set polytope of a perfect graph on $n$ vertices is known to have psd rank $n+1$ which provides the only known polynomial time algorithm (via semidefinite programming) for finding the highest weight stable set in a perfect graph.  
The connection between psd rank and semidefinite lifts allows psd rank to become a possible tool for settling questions concerning semidefinite programming in combinatorial optimization. A question that is currently active is whether the nonnegative rank of the {\em perfect matching polytope} of a complete graph $K_n$ is polynomial in $n$. This was raised in \cite{Yannakakis} where it was shown that there are no small symmetric $\RR^k_+$-lifts of these polytopes. Both nonnegative and psd ranks of these polytopes are unknown at the moment. Another active question concerns the possible gap between $\rankplus P$ and $\rankpsd P$ which is a measure of the relative strength of linear vs. semidefinite programming for linear optimization over $P$. No example where this gap is large is known so far. While nonnegative rank has been studied in several papers, the notion of psd rank is new.
The results and techniques presented here further our understanding of psd rank of a polytope.

This paper is organized as follows. In Section~\ref{sec:general matrices} we introduce tools to study the psd rank of a general nonnegative matrix $M$ using Hadamard square roots of $M$. In Section~\ref{sec:slack matrices}, we specialize to slack matrices of polytopes and derive the lower bound of $n+1$ for the psd rank of a $n$-dimensional polytope (Proposition~\ref{prop:lower bound on psd rank}). Theorem~\ref{thm:psdrank n+1} characterizes $n$-dimensional polytopes with psd rank $n+1$ in terms of the lowest rank of a Hadamard square root of a slack matrix of $P$. In Section~\ref{sec:examples} we give several families of polytopes whose psd rank equals this lower bound. In the plane, the full-dimensional polytopes with psd rank three are exactly triangles and quadrilaterals (Theorem~\ref{thm:polygons of psd rank 3}). Every polytope in $\RR^n$ with at most $n+2$ vertices has psd rank $n+1$ (Theorem~\ref{thm:n+2 vertices}). In $\RR^3$, the situation gets more tricky and we exhibit polytopes of a fixed combinatorial type (octahedra) whose psd rank depends on the embedding of the polytope. Nonetheless, we show that the three dimensional polytopes with psd rank four are exactly tetrahedra, quadrilateral pyramids, bisimplicies, combinatorial triangular prisms, ``biplanar'' octahedra, and ``biplanar'' cuboids (Theorem~\ref{thm:3d polytopes of psd rank four}). It follows from \cite{GPT1} that if $S_P$ is a $0/1$ matrix then $\rankpsd P = n+1$. Such polytopes are called $2$-level polytopes and include the stable set polytopes of perfect graphs. We exhibit polytopes that are not combinatorially equivalent to $2$-level polytopes whose psd rank achieves the lower bound. We also show polytopes that are combinatorially equivalent to $2$-level polytopes whose psd rank is not the minimum possible.  Finally, we prove in Theorem~\ref{thm:perfect graphs} that for stable set polytopes, the results of Lov{\'a}sz prevail even in our general setting in the sense that the stable set polytope of a graph on $n$ vertices has psd rank $n+1$ if and only if the graph is perfect.

\section{Hadamard square roots and psd ranks of matrices}
\label{sec:general matrices}

\begin{definition} \label{def:Hadamard square root} 
A {\em Hadamard square root} of a nonnegative real matrix $M$, denoted as $\sqrt{M}$,  is any matrix 
whose $(i,j)$-entry is a square root (positive or negative) of the $(i,j)$-entry of $M$.  Additionally, we 
let $\sqrt[+]{M}$ denote the all-nonnegative Hadamard square root of $M$.
\end{definition}

Let $\sqrtrank M := \textup{min} \{ \rank \sqrt{M} \}$ be the minimum rank of a Hadamard square root of a nonnegative matrix $M$. We recall the basic connection between the psd rank of a nonnegative matrix $M$ and $\sqrtrank M$ shown in \cite[Proposition 4.8]{GPT2} , and also in \cite{FMPTW}.

\begin{proposition} \label{prop:psd rank and Hadamard square roots}
If $M$ is a nonnegative matrix, then $\rankpsd M \leq \sqrtrank M $.
In particular, the psd rank of a $0/1$ matrix is at most the rank of the matrix.
\end{proposition}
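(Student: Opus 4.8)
The plan is to exhibit an explicit $\PSD^r$-factorization of $M$ where $r = \sqrtrank M$. Start by fixing a Hadamard square root $N = \sqrt{M}$ with $\rank N = r$, so that $M_{ij} = N_{ij}^2$ for all $i,j$. Since $N$ has rank $r$, factor it as $N = UV^T$ where $U \in \RR^{p \times r}$ and $V \in \RR^{q \times r}$; write $u_i \in \RR^r$ for the $i$-th row of $U$ and $v_j \in \RR^r$ for the $j$-th row of $V$, so that $N_{ij} = \langle u_i, v_j \rangle$.

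The key step is the rank-one identity
\[
\langle u_i, v_j \rangle^2 = (u_i^T v_j)(v_j^T u_i) = \textup{Tr}(u_i u_i^T v_j v_j^T) = \langle u_i u_i^T, \, v_j v_j^T \rangle,
\]
where the last inner product is the trace inner product on $r \times r$ symmetric matrices. Setting $a^i := u_i u_i^T$ and $b^j := v_j v_j^T$, each $a^i$ and each $b^j$ is a (rank-one) element of $\PSD^r$, and $\langle a^i, b^j \rangle = N_{ij}^2 = M_{ij}$. This is precisely a $\PSD^r$-factorization of $M$ in the sense of Definition~\ref{def:K-factorization} (using that $\PSD^r$ is self-dual), so $\rankpsd M \leq r = \sqrtrank M$.

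For the second assertion, observe that if $M$ is a $0/1$ matrix then $M$ is itself a Hadamard square root of $M$, since $0^2 = 0$ and $1^2 = 1$; hence $\sqrtrank M \leq \rank M$, and combining with the first part gives $\rankpsd M \leq \rank M$.

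There is no serious obstacle here: the only point requiring care is the passage from a low-rank Hadamard square root to rank-one psd factors via the trace identity above, and checking that the resulting matrices indeed lie in $\PSD^r$ (which is immediate, as $uu^T \succeq 0$ for any vector $u$). The content of the proposition is really this observation, which is why it holds with no further hypotheses on $M$.
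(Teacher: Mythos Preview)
Your proof is correct and essentially identical to the paper's: both take a rank-$r$ Hadamard square root, write its entries as $\langle u_i,v_j\rangle$ for vectors in $\RR^r$, and use the trace identity $\langle u_i,v_j\rangle^2 = \langle u_iu_i^T, v_jv_j^T\rangle$ to produce a $\PSD^r$-factorization with rank-one factors. You have additionally spelled out the $0/1$ case (that such a matrix is its own Hadamard square root), which the paper leaves implicit.
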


\begin{proof} Let $\sqrt{M}$ be a Hadamard square root of $M \in \RR^{p \times q}_+$ of rank $r$. Then there exist vectors 
$a_1, \ldots, a_p, b_1, \ldots, b_q \in \RR^{r}$ such that $(\sqrt{M})_{ij} = \langle a_i, b_j \rangle$. Therefore,  
$M_{ij} = \langle a_i, b_j \rangle^2 = \langle a_i a_i^T, b_j b_j^T \rangle$ where the second inner product is the trace inner product for symmetric matrices defined earlier.  Hence, $\rankpsd{M} \leq r$.
\end{proof}

The upper bound in Proposition~\ref{prop:psd rank and Hadamard square roots} can be strict even for simple examples.

\begin{example} \label{ex:rank 3 and psd rank 2} For the matrix
\[ M := \left[ \begin{array}{ccc}
1 & 1 & 1 \\
1 & 0 & 1 \\
0 & 1 & 1 \end{array} \right], \] 
$\rank M  = \sqrtrank M = 3$ while $\rankpsd M  = 2$.
Assigning the first three psd matrices below to the rows of $M$, and the next three to the columns of $M$, we obtain a 
$\PSD^2$-factorization of $M$:
$$ \left[ \begin{array}{cc} 0.5 & -0.5 \\ -0.5 & 1 \end{array} \right], \left[ \begin{array}{cc} 0.5 & 0 \\ 0 & 0 \end{array} \right], 
\left[ \begin{array}{cc} 0 & 0 \\ 0 & 1 \end{array} \right] \textup{ and } 
\left[ \begin{array}{cc} 2 & 0 \\ 0 & 0 \end{array} \right], \left[ \begin{array}{cc} 0 & 0 \\ 0 & 1 \end{array} \right], \left[ \begin{array}{cc} 2 & 1 \\ 1 & 1 \end{array} \right].$$ 
\end{example}

\bigskip

Even though $\sqrtrank M$ is only an upper bound on $\rankpsd{M}$, we cannot find $\PSD^k$-factorizations of $M$ with only rank one factors if $k 
< \sqrtrank M$ as shown in Lemma~\ref{lem:rank one factors} below. Note that the psd factors corresponding to the first row and the third column of  the matrix $M$ in Example~\ref{ex:rank 3 and psd rank 2} both have rank two. 

\begin{lemma} \label{lem:rank one factors}
The smallest $k$ for which a nonnegative real matrix $M$ admits a $\PSD^k$-factorization in which all factors are matrices of rank one is $k = \sqrtrank M$.
\end{lemma}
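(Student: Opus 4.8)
The plan is to prove the two inequalities $k \geq \sqrtrank M$ and $k \leq \sqrtrank M$ separately, where $k$ denotes the smallest size of a psd factorization of $M$ using only rank-one factors.

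For the upper bound, I would simply reinterpret the proof of Proposition~\ref{prop:psd rank and Hadamard square roots}. If $\sqrt{M}$ is a Hadamard square root of rank $r$, then writing $(\sqrt{M})_{ij} = \langle a_i, b_j \rangle$ with $a_i, b_j \in \RR^r$ produces the factorization $M_{ij} = \langle a_i a_i^T, b_j b_j^T \rangle$, and every factor $a_i a_i^T$, $b_j b_j^T$ has rank one. Hence $M$ admits a $\PSD^r$-factorization with all factors rank one, so $k \leq \sqrtrank M$.

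For the lower bound, suppose $M \in \RR_+^{p \times q}$ has a $\PSD^k$-factorization $M_{ij} = \langle A_i, B_j \rangle$ in which every $A_i$ and every $B_j$ has rank one. Write $A_i = a_i a_i^T$ and $B_j = b_j b_j^T$ with $a_i, b_j \in \RR^k$. Then $M_{ij} = \langle a_i a_i^T, b_j b_j^T\rangle = (a_i^T b_j)^2$, so the matrix $N$ with entries $N_{ij} := a_i^T b_j$ satisfies $N_{ij}^2 = M_{ij}$; that is, $N$ is a Hadamard square root of $M$. Since $N$ factors through $\RR^k$ (rows indexed by the $a_i$, columns by the $b_j$), we have $\rank N \leq k$, and therefore $\sqrtrank M \leq k$. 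Combining the two inequalities gives $k = \sqrtrank M$.

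There is essentially no hard step here — the lemma is really just a bookkeeping sharpening of Proposition~\ref{prop:psd rank and Hadamard square roots}, observing that the passage between rank-one psd factorizations and Hadamard square roots is a bijective correspondence rather than a one-way construction. The only point requiring a word of care is that a psd matrix of rank one is precisely one of the form $vv^T$ for a real vector $v$ (as opposed to $\pm vv^T$), which is automatic since psd matrices are positive semidefinite; this is what guarantees $M_{ij} = (a_i^T b_j)^2 \geq 0$ is consistent and that the entries of $N$ are genuine (signed) square roots of the entries of $M$.
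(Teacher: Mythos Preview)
Your proof is correct and follows essentially the same approach as the paper: both directions pass between a rank-one $\PSD^k$-factorization $M_{ij}=\langle a_ia_i^T,b_jb_j^T\rangle=(a_i^Tb_j)^2$ and a Hadamard square root $N_{ij}=a_i^Tb_j$ of rank at most $k$, with the upper bound simply reusing the construction from Proposition~\ref{prop:psd rank and Hadamard square roots}.
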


\begin{proof} If $k = \sqrtrank M$, then there is a Hadamard square root of $M \in \RR^{p \times q}_+$ of rank $k$ and the proof of Proposition~\ref{prop:psd rank and Hadamard square roots} gives a $\PSD^k$-factorization of $M$ in which all factors have rank one.
On the other hand, if there exist $a_1a_1^T, \ldots, a_pa_p^T, b_1b_1^T, \ldots, b_qb_q^T \in \PSD^k$ such that $M_{ij}= \langle a_ia_i^T, b_jb_j^T \rangle = \langle a_i, b_j \rangle^2$, then the matrix with $(i,j)$-entry $\langle a_i, b_j \rangle$ is a Hadamard square root of $M$ of rank at most $k$. 
\end{proof} 

\begin{example} \label{ex:derangement matrix}
For a $0/1$ matrix $M$, $\rankpsd M \leq \sqrtrank M \leq \rank M$. In Example~\ref{ex:rank 3 and psd rank 2} we saw that the first inequality may be strict. We now show that the second inequality may also be strict.  
The following {\em derangement} matrix 
\[ \left[ \begin{array}{ccc}
0 & 1 & 1 \\
1 & 0 & 1 \\
1 & 1 & 0 \end{array} \right] \]
has rank three and psd rank two. An $\PSD^2$-factorization in which all factors have rank one is gotten by assigning 
$$ \left[ \begin{array}{cc} 0 & 0 \\ 0 & 1 \end{array} \right], \left[ \begin{array}{cc} 1 & 0 \\ 0 & 0 \end{array} \right], \left[ \begin{array}{cc} 1 & 1 \\ 1 & 1 \end{array} \right], \left[ \begin{array}{cc} 1 & 0 \\ 0 & 0 \end{array} \right],\left[ \begin{array}{cc} 0 & 0 \\ 0 & 1 \end{array} \right], \left[ \begin{array}{cc} 1 & -1 \\ -1 & 1 \end{array} \right] $$ to the three rows and the three columns, respectively. A Hadamard square root of $M$ of rank two is 
$$ \left[ \begin{array}{rrr} 
0 & -1 & 1\\
1 & 0 & 1\\
1 & 1 & 0
\end{array} \right].
$$ 
\end{example}

We now show a method to increase the psd rank of any matrix by one. This technique will be used later to study the psd rank of a polytope.

\begin{proposition} \label{prop:extending rank}
Suppose $M \in \RR^{p \times q}_+$ and $\rankpsd M = k$. If $M$ is extended to $M' = \left( \begin{array}{cc} M & {\bf 0} \\ w & \alpha \end{array} \right)$ 
where $w \in \RR_+^q$, $\alpha > 0$ and ${\bf 0}$ is a column of zeros, then $\rankpsd M' = k+1$. Further, the factor 
associated to the last column of $M'$ in any $\PSD^{k+1}$-factorization of $M'$ has rank one.
\end{proposition}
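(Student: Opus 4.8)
The plan is to prove $\rankpsd M' \le k+1$ by explicitly padding a $\PSD^k$-factorization of $M$, to prove $\rankpsd M' \ge k+1$ by exploiting the all-zero column of $M'$ together with the positive entry $\alpha$ beneath it, and then to read off the rank-one statement from the same lower-bound argument run with ambient dimension $k+1$.

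\emph{Upper bound.} Start with a $\PSD^k$-factorization $A_1,\dots,A_p \in \PSD^k$, $B_1,\dots,B_q \in \PSD^k$ of $M$. Define $\hat A_i \in \PSD^{k+1}$ for $i \le p$ to be the block-diagonal matrix with diagonal blocks $A_i$ and the $1 \times 1$ zero block; let $\hat A_{p+1} = e e^T$, where $e$ is the last standard basis vector of $\RR^{k+1}$, a rank-one matrix; let $\hat B_j$ for $j \le q$ be the block-diagonal matrix with diagonal blocks $B_j$ and the scalar $w_j \ge 0$; and let $\hat B_{q+1} = \alpha\, e e^T$. A routine check of the inner products $\langle \hat A_i, \hat B_j\rangle$ (four cases) shows that these factors realize $M'$, so $\rankpsd M' \le k+1$. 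Conversely, $M$ is a submatrix of $M'$, so restricting any $\PSD^t$-factorization of $M'$ to the first $p$ rows and first $q$ columns yields a $\PSD^t$-factorization of $M$; hence $\rankpsd M' \ge \rankpsd M = k$, and it remains to rule out equality.

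\emph{Lower bound via the last column.} Here is the key mechanism. Let $C_1,\dots,C_{p+1} \in \PSD^t$, $D_1,\dots,D_{q+1} \in \PSD^t$ be any $\PSD^t$-factorization of $M'$. The last column of $M'$ gives $\langle C_i, D_{q+1}\rangle = 0$ for $i \le p$ and $\langle C_{p+1}, D_{q+1}\rangle = \alpha > 0$. For positive semidefinite matrices, $\langle X, Y\rangle = 0$ is equivalent to $XY = 0$, that is, to $\mathrm{range}(X) \perp \mathrm{range}(Y)$; hence $\mathrm{range}(C_i) \subseteq W := \mathrm{range}(D_{q+1})^\perp$ for every $i \le p$, while $D_{q+1} \neq 0$ forces $\dim W \le t-1$. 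Now compress to $W$: if the columns of $U$ form an orthonormal basis of $W$ and $\Pi = U U^T$, then $\mathrm{range}(C_i) \subseteq W$ gives $\Pi C_i \Pi = C_i$, so $\langle C_i, D_j\rangle = \langle U^T C_i U,\, U^T D_j U\rangle$ for all $i \le p$. Therefore $U^T C_i U$ ($i \le p$) and $U^T D_j U$ ($j \le q$) form a $\PSD^{\dim W}$-factorization of $M$, so $\dim W \ge \rankpsd M = k$, and thus $\mathrm{rank}(D_{q+1}) = t - \dim W \le t - k$.

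\emph{Conclusion.} Taking $t = k$ would force $\mathrm{rank}(D_{q+1}) \le 0$, impossible since $D_{q+1} \neq 0$; so no $\PSD^k$-factorization of $M'$ exists and $\rankpsd M' = k+1$. Taking $t = k+1$ gives $\mathrm{rank}(D_{q+1}) \le 1$ in every $\PSD^{k+1}$-factorization of $M'$, and $D_{q+1} \neq 0$ then forces equality, which is the claimed rank-one property of the last-column factor. The step I expect to be the main obstacle is the compression argument in the lower bound: one must check carefully that replacing the $C_i, D_j$ by their restrictions $U^T C_i U, U^T D_j U$ preserves the relevant inner products, which rests precisely on the psd identity $\langle X,Y\rangle = 0 \Leftrightarrow XY = 0$ together with $\Pi C_i \Pi = C_i$ for $i \le p$.
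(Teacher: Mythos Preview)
Your proposal is correct and follows essentially the same approach as the paper. The paper diagonalizes the last-column factor $B$ by an orthogonal matrix, observes that $\langle A_i',D\rangle=0$ forces the first $r$ rows and columns of each $A_i'$ to vanish, and then reads off a $\PSD^{k-r}$-factorization of $M$ from the bottom-right blocks; your compression onto $W=\mathrm{range}(D_{q+1})^\perp$ via an orthonormal basis $U$ is exactly this argument phrased in coordinate-free language, and your unified treatment of the cases $t=k$ and $t=k+1$ is a tidy way to organize the same two conclusions the paper draws.
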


\begin{proof}
Suppose $M'$ has a $\PSD^k$-factorization with factors $A_1, \ldots, A_p, A \in \PSD^k$ associated to its rows and $B_1, \ldots, B_q,B \in \PSD^k$ associated to its columns. Then $A, B \neq 0$ since $\langle A, B \rangle = \alpha \neq 0$. Let $r= \rank(B) >0$. Then 
there exists an orthogonal matrix $U$ such that $U^{-1} B U = \diag(\lambda_1, \ldots, \lambda_{r}, 0, \ldots, 0) =: D$ where $\lambda_1, \ldots, \lambda_r$ are the nonzero (positive) eigenvalues of $B$.
Let $A_i' := U^{-1}A_iU$ for $i=1,\ldots,p$. Then 
$$\langle D,A_i' \rangle = \textup{Tr}(U^{-1}BA_iU) = \textup{Tr}(BA_i) = \langle B, A_i \rangle = 0 \,\,\,\forall \,\,\,i=1,\ldots,p.$$ 
Since the diagonal entries of $A_i'$ are nonnegative, $\langle D, A_i' \rangle = 0$ implies that the first $r$ diagonal entries of $A_i'$ are all zero. Therefore, the first 
$r$ rows and the first $r$ columns of $A_i'$ are all zero since $A_i'$ is psd. Now let $B_j' := U^{-1}B_jU$ for all $j=1,\ldots,q$. Then for all $i=1,\ldots,p$ and $j=1,\ldots,q$, 
$$ \langle A_i', B_j' \rangle = \textup{Tr}(U^{-1}A_iB_j U) = \langle A_i, B_j \rangle = M_{ij}. $$
However, since $A_i'$ has nonzero entries only in its bottom right $(k-r) \times (k-r)$ block,  it also follows that 
$ M_{ij} = \langle \tilde{A_i}, \tilde{B_j} \rangle$ where 
$\tilde{A_i}$ is the bottom right $(k-r) \times (k-r)$-submatrix of $A_i'$ and $\tilde{B_j}$ is the bottom right 
$(k-r) \times (k-r)$ submatrix of $B_j'$. Thus, there exists a $\PSD^{k-r}$-factorization of $M$ which is a contradiction to the fact that the psd rank of $M$ is $k$. Therefore, $\rankpsd M' \geq k+1$. 

An $\PSD^{k+1}$-factorization of $M'$ can be obtained from an $\PSD^k$-factorization 
$A_1, \ldots, A_p$ $B_1, \ldots, B_q \in \PSD^k$ of $M$ by setting 
$$ \tilde{A_i} := \left[ \begin{array}{cc} A_i & {\bf 0} \\ {\bf 0} & 0 \end{array} \right], 
\tilde{B_j} := \left[ \begin{array}{cc} B_j & {\bf 0} \\ {\bf 0} & w_j \end{array} \right],
\tilde{A} := \left[ \begin{array}{cc} {\bf 0} & {\bf 0} \\ {\bf 0} & 1 \end{array} \right], 
\tilde{B} := \left[ \begin{array}{cc} {\bf 0} & {\bf 0} \\ {\bf 0} & \alpha \end{array} \right].$$

Now consider an $\PSD^{k+1}$-factorization of $M'$ and let $B$ be the matrix associated to the last column of $M'$ in this factorization. If $\rank(B)=r$, then by the same argument as above, there exists an $\PSD^{k+1-r}$-factorization of $M$. Since $\rankpsd M = k$, $k+1-r \geq k$ or equivalently, $r \leq 1$. Since $B \neq 0$, it follows that $\rank(B)=1$.
\end{proof}

\begin{example} \label{ex:psd rank of diagonal matrix}
The psd rank of a $n \times n$ diagonal matrix with positive diagonal entries is $n$. The statement holds for $n=1$ and the general case follows by induction on $n$ and the first part of Proposition~\ref{prop:extending rank}. 
Each factor in an $\PSD^n$-factorization of such a diagonal matrix must have rank one. This 
follows by applying the second part of Proposition~\ref{prop:extending rank} to both the diagonal matrix and its transpose.
\end{example}


\section{Hadamard square roots and psd ranks of polytopes}
\label{sec:slack matrices}

In this section we derive a lower bound to the psd rank of any polytope. We begin with the following easy fact.
 
\begin{lemma} \label{lem:rank of slack matrix} 
Let $P \subset \RR^n$ be an $n$-dimensional polytope. Then a slack matrix $S_P$ has rank $n+1$.
\end{lemma}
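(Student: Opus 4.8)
The plan is to exhibit a specific factorization-style construction that reveals the rank of $S_P$ directly. Let $P \subset \RR^n$ be $n$-dimensional with vertices $p_1, \ldots, p_v$ and irredundant facet description given by $\beta_j - \langle a_j, x \rangle \geq 0$ for $j = 1, \ldots, f$. The key observation is that the $(i,j)$-entry $\beta_j - \langle a_j, p_i \rangle$ of $S_P$ factors as the product of the row vector $(1, -p_i^T) \in \RR^{n+1}$ with the column vector $(\beta_j, a_j^T)^T \in \RR^{n+1}$. Writing $V$ for the $v \times (n+1)$ matrix with rows $(1, -p_i^T)$ and $A$ for the $(n+1) \times f$ matrix with columns $(\beta_j, a_j^T)^T$, we get $S_P = VA$. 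This immediately gives $\rank S_P \leq n+1$.

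For the reverse inequality, I would argue that both factors have full rank $n+1$. For $V$: since $P$ is $n$-dimensional, its vertices $p_1, \ldots, p_v$ affinely span $\RR^n$, so among them we can choose $n+1$ points $p_{i_0}, \ldots, p_{i_n}$ that are affinely independent; the corresponding rows $(1, -p_{i_k}^T)$ are then linearly independent in $\RR^{n+1}$, so $\rank V = n+1$. For $A$: the facet normals $a_j$ must positively span $\RR^n$ (otherwise $P$ would be unbounded or lower-dimensional), and in particular they linearly span $\RR^n$; combined with the fact that no $a_j$ is zero and the $\beta_j$ coordinates, a short argument shows the columns $(\beta_j, a_j^T)^T$ span $\RR^{n+1}$, so $\rank A = n+1$. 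Since $S_P = VA$ with $V$ of rank $n+1$ having a left inverse on its column space and $A$ of rank $n+1$, the product has rank exactly $n+1$; concretely one can restrict to the $(n+1) \times (n+1)$ submatrix of $V$ of full rank and note that $A$ surjects onto $\RR^{n+1}$, forcing the $(n+1) \times f$ block $VA$ restricted to those rows to have rank $n+1$.

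The main (and only real) obstacle is the lower bound argument for $\rank A = n+1$: one needs to rule out the possibility that the vector $(1, 0, \ldots, 0)$ lies outside the column span of $A$, i.e.\ that there is no linear combination of the $(\beta_j, a_j^T)^T$ equal to $(c, 0, \ldots, 0)$ with $c \neq 0$. Equivalently, one needs $\sum_j \lambda_j a_j = 0$ to be achievable with $\sum_j \lambda_j \beta_j \neq 0$. This follows because a polytope (a bounded, full-dimensional set) has $0$ in the interior of the cone generated by its facet normals, so there exist $\lambda_j > 0$ with $\sum_j \lambda_j a_j = 0$, and then $\sum_j \lambda_j \beta_j = \sum_j \lambda_j(\beta_j - \langle a_j, p_1\rangle) > 0$ since every entry of $S_P$ on a fixed row is nonnegative and, by irredundancy, not all are zero. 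Alternatively, and perhaps more cleanly, one can avoid analyzing $A$ separately: since $\rank S_P \le n+1$ is already established and $\rankplus S_P \ge \rank S_P$ together with $\rankplus P \ge n+1$ would be circular here, I would instead directly produce $n+1$ rows and $n+1$ columns of $S_P$ whose intersection is an invertible matrix, using $n+1$ affinely independent vertices and $n+1$ facets in "general position" relative to them, which exist by full-dimensionality. I expect writing the clean version of this last step to be the bulk of the work; everything else is the routine factorization $S_P = VA$.
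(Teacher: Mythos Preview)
Your approach is correct and is essentially the same as the paper's: you exhibit the factorization $S_P = VA$ with $V \in \RR^{v\times(n+1)}$ built from the vertices and $A \in \RR^{(n+1)\times f}$ built from the facet data, and then argue both factors have rank $n+1$ using that $P$ is full-dimensional (for $V$) and bounded (for $A$). The paper's proof is the one-line version of exactly this, asserting without detail that ``since $P$ is full-dimensional and bounded, both of the factors have rank $n+1$''; your discussion of why $A$ has full rank (via a positive dependence $\sum_j \lambda_j a_j = 0$ with $\sum_j \lambda_j \beta_j > 0$) is a correct way to unpack that clause, and your suggested alternative of extracting an invertible $(n+1)\times(n+1)$ submatrix is unnecessary once you have $\rank V = \rank A = n+1$.
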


\begin{proof} 
Let the vertices of $P$ be $p_1, \ldots, p_v$ and the facet inequalities of $P$ be $\langle a_j, x \rangle \leq \beta_j$ for $j=1, \ldots, f$. Then the corresponding $v \times f$ slack matrix 
$S_P$ has $(i,j)$-entry equal to $\beta_j - \langle a_j, p_i \rangle$, and we may factorize $S_P$ as 
$$ \left( \begin{array}{ll} 
1 & p_1 \\
\vdots & \vdots \\
1 & p_v \end{array} \right) 
\left( \begin{array}{ccc}
\beta_1 & \cdots & \beta_f\\
- a_1 & \cdots & - a_f 
\end{array} \right). $$
Since $P$ is full-dimensional and bounded, both of the factors have rank $n+1$.
\end{proof}

We now obtain a lower bound on the psd rank of a polytope.

\begin{proposition} \label{prop:lower bound on psd rank}
If $P \subset \RR^n$ is a full-dimensional polytope, then the psd rank of $P$ is at least $n+1$. Furthermore, if 
$\rankpsd P = n+1$, then {\em every} $\PSD^{n+1}$-factorization of the slack matrix of $P$ only uses rank one matrices as factors.
\end{proposition}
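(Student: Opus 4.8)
The plan is to reduce the claimed lower bound to the rank statement from Lemma~\ref{lem:rank of slack matrix} combined with the eigenvalue/orthogonal-conjugation trick that already appears in the proof of Proposition~\ref{prop:extending rank}. First I would recall that, by Lemma~\ref{lem:rank of slack matrix}, any slack matrix $S_P$ of a full-dimensional $P\subset\RR^n$ has rank exactly $n+1$. So it suffices to show that if $S_P$ has a $\PSD^k$-factorization, then $k\ge \rank S_P$, and moreover that equality forces all factors to have rank one; applying this with $\rank S_P = n+1$ gives both halves of the proposition at once.

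The key observation is that slack matrices have a special structure that general nonnegative matrices lack: every slack matrix has a row of zeros after one passes to the polar, or more precisely every facet $j$ is tight at some vertex $i$, so the $(i,j)$ entry of $S_P$ is zero, and in fact for each column $j$ there is an entry equal to zero and for each row $i$ there is an entry equal to zero. Actually the cleanest route is the following. Suppose $S_P$ has a $\PSD^k$-factorization with factors $A_1,\dots,A_v$ (rows, i.e.\ vertices) and $B_1,\dots,B_f$ (columns, i.e.\ facets). Fix a facet $j$; since it is a genuine facet there is a vertex $i$ lying on it, so $\langle A_i,B_j\rangle = 0$, and since both are psd this forces the column space of $B_j$ to be orthogonal to the column space of $A_i$. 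More usefully, I would argue that one can reconstruct a linear (rather than quadratic) factorization of $S_P$ itself from the psd factors in the equality case: write $A_i = \sum_\ell a_{i\ell}a_{i\ell}^T$ and $B_j = \sum_m b_{jm}b_{jm}^T$ as sums of rank-one terms, so that $(S_P)_{ij} = \sum_{\ell,m}\langle a_{i\ell},b_{jm}\rangle^2$. Each summand is a rank-one psd factorization entry, and the point is to count: a $\PSD^k$ factorization where some factor has rank $\ge 2$ can, by the conjugation argument in Proposition~\ref{prop:extending rank}, be collapsed.

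Concretely, here is the mechanism I expect to use, mirroring Proposition~\ref{prop:extending rank}. Take a $\PSD^k$-factorization of $S_P$ and pick a column factor $B_j$ of rank $r\ge 1$; diagonalize $B_j = U\,\diag(\lambda_1,\dots,\lambda_r,0,\dots,0)\,U^{-1}$ and conjugate all the row factors $A_i$ by $U$. Since facet $j$ is tight at some vertex, $\langle A_i, B_j\rangle = 0$ for that $i$, which kills the top-left $r\times r$ block of the conjugated $A_i$ — but we need this for \emph{all} $i$ simultaneously to drop the dimension, and that is exactly what fails for a single facet. The fix is to observe that we may instead use the $B_j$ corresponding to a facet together with \emph{all} vertices on it; the vertices on a facet affinely span an $(n-1)$-dimensional set. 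Hmm — I think the cleanest argument is actually dual and avoids this: one shows directly that $\rank S_P \le k$ for any $\PSD^k$-factorization, using that $S_P$ admits a rank-$(n+1)$ factorization through $\RR^{n+1}$ and that a $\PSD^k$-factorization gives, after taking square roots entrywise on a suitable sign pattern, a Hadamard square root — no, that's the wrong direction (Proposition~\ref{prop:psd rank and Hadamard square roots} goes the other way).

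Let me restate the plan honestly. The approach is: (1) by Lemma~\ref{lem:rank of slack matrix}, $\rank S_P = n+1$; (2) prove the general inequality that $\rankpsd M \ge \rank M$ is \emph{false} in general, so we genuinely need the polytope structure — specifically, that $S_P$ has a zero in every row and every column and the zero-pattern is that of a polytope; (3) use the $U$-conjugation argument from Proposition~\ref{prop:extending rank}: if some row or column factor in a $\PSD^{n+1}$-factorization had rank $\ge 2$, then after conjugating we could excise a common zero block shared by enough factors and produce a $\PSD^{n}$-factorization, contradicting step (1) once we know $\rankpsd S_P \ge n+1$; and (4) for the lower bound $\rankpsd S_P \ge n+1$ itself, combine the rank-one reduction with the fact that a $\PSD^k$-factorization in which all factors have rank one yields a Hadamard square root of $S_P$ of rank $\le k$ (Lemma~\ref{lem:rank one factors}), whose rank is at least... no — a Hadamard square root can have smaller rank than the original matrix, as Example~\ref{ex:derangement matrix} shows.

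Given the above tension, the genuinely correct plan must be: \textbf{the lower bound $\rankpsd P \ge n+1$ does not follow from rank considerations alone and instead uses that a psd factorization of a slack matrix, restricted to a vertex and the facets through it, interacts with the fact that $P$ is a polytope with a vertex.} So the plan I would carry out: fix a vertex $p_i$ of $P$; the $n$ facets through $p_i$ (at least $n$, since $P$ is $n$-dimensional and $p_i$ is a vertex) give columns $j_1,\dots,j_n$ with $(S_P)_{i,j_t} = 0$, hence $\langle A_i, B_{j_t}\rangle = 0$. Conjugating by $U$ that diagonalizes $A_i$ to have its nonzero block in positions $1,\dots,s$ (where $s = \rank A_i \ge 1$), each $B_{j_t}$ then has zero top-left $s\times s$ block, so effectively lives in $\PSD^{k-s}$. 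Iterating this and tracking the affine hull of the vertices, one forces $k - s \ge$ (something like $n$), and $s\ge 1$, giving $k\ge n+1$; equality forces $s=1$, i.e.\ every vertex factor has rank one, and by the polar symmetry (transpose) every facet factor has rank one too. The main obstacle, and the place I would spend the most care, is making the induction/dimension-counting in this conjugation argument rigorous — precisely, showing that after excising the common zero block shared by $B_{j_1},\dots,B_{j_n}$ one still has a valid psd factorization of a matrix whose rank is still $n+1$ minus the right amount, so that the induction closes to give exactly $k\ge n+1$ with the rank-one conclusion on the boundary.

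\medskip

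\noindent\emph{Plan summary.} I would (i) invoke Lemma~\ref{lem:rank of slack matrix} to get $\rank S_P = n+1$; (ii) set up a $\PSD^k$-factorization $\{A_i\},\{B_j\}$ of $S_P$ and exploit that each column (facet) has a zero entry at each incident vertex; (iii) run the orthogonal-conjugation/eigenvalue argument of Proposition~\ref{prop:extending rank} to peel off a common zero block, reducing to a smaller psd factorization of a submatrix/contraction of $S_P$ whose rank is controlled; (iv) induct on the dimension $n$, using the base case of a point or segment, to conclude $k \ge n+1$; and (v) in the equality case, note that no block can be peeled with $\rank > 1$, so every vertex factor is rank one, and then transpose (passing to the polar $P^\circ$, whose slack matrix is $S_P^T$ up to row scaling, which has the same psd rank) to conclude every facet factor is rank one as well. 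The crux — the step most likely to need a careful, possibly delicate argument — is (iii)–(iv): verifying that the contraction of $S_P$ obtained after conjugating and deleting the common zero block is again the slack matrix (or a rank-$(n+1-s)$ nonnegative matrix factoring appropriately) of a lower-dimensional polytope, so that the inductive hypothesis applies cleanly and yields the sharp bound together with the rank-one conclusion.
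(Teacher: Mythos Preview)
Your overall architecture---induction on $n$, using the conjugation argument of Proposition~\ref{prop:extending rank}, then polarity for the other set of factors---is exactly the paper's approach. But your proposed mechanism for the inductive step has the roles reversed, and this is where the gap lies.

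You propose to fix a \emph{vertex} $p_i$, diagonalize its factor $A_i$ (of rank $s$), and conclude that the facets through $p_i$ have factors living in $\PSD^{k-s}$. That is correct as far as it goes, and it does produce a $\PSD^{k-s}$-factorization of the submatrix of $S_P$ whose columns are the facets through $p_i$. The problem is the step you yourself flag as the crux: that submatrix is \emph{not} the slack matrix of an $(n-1)$-polytope in any direct way (its rows are all vertices of $P$, not the vertices of some lower-dimensional face or vertex figure), so the induction hypothesis does not apply to it. You cannot simply appeal to ``rank $n$'' here, since as you note, $\rankpsd M \ge \rank M$ is false for general nonnegative $M$.

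The clean fix is to dualize your mechanism: fix a \emph{facet} $F$ rather than a vertex. Then $F$ is itself an $(n-1)$-polytope, and its slack matrix $S_F$ sits (up to column scaling) inside $S_P$ as the submatrix indexed by the vertices of $F$ and the facets of $P$ adjacent to $F$. Adjoining one more vertex $p\notin F$ and the column for $F$ itself gives a submatrix of the form
\[
S_F' \;=\; \begin{pmatrix} S_F & \mathbf{0} \\ * & \alpha \end{pmatrix},\qquad \alpha>0,
\]
which is precisely the setup of Proposition~\ref{prop:extending rank}. That proposition then does all the work: by induction $\rankpsd S_F \ge n$, so $\rankpsd S_P \ge \rankpsd S_F' \ge n+1$; and in any $\PSD^{n+1}$-factorization the factor for the column $F$ has rank one. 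Running this over all facets $F$ gives the rank-one conclusion for all column factors, and polarity (as you say) handles the row factors. No delicate ``contraction'' argument is needed---the lower-dimensional polytope is sitting right there as a facet.
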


\begin{proof} 
The proof is by induction on $n$. If $n=1$, then $P$ is a line segment and we may assume that its vertices are 
$p_1, p_2$ and facets are $f_1, f_2$ with $p_1 = f_2$ and $p_2 = f_1$. Hence its slack matrix is a $2 \times 2$ diagonal matrix with positive diagonal entries. By the arguments in Example~\ref{ex:psd rank of diagonal matrix}, $\rankpsd S_P = 2$ 
and any $\PSD^{2}$-factorization of it uses only matrices of rank one.

Assume the first statement in the theorem holds up to dimension $n-1$ and consider a polytope $P \subset \RR^n$ of dimension $n$. Let $F$ be a facet of $P$ with vertices $p_1, \ldots, p_s$, facets $f_1, \ldots, f_t$ and slack matrix $S_F$. Suppose $f_i$ corresponds to facet $F_i$ of $P$ for $i=1, \ldots, t$. By induction hypothesis, $\rankpsd F = 
\rankpsd S_F \geq n$. Let $p$ be a vertex of $P$ not in $F$ and assume that the top left $(s+1) \times (t+1)$ submatrix of $S_P$ is indexed by $p_1, \ldots, p_s, p$ in the rows and $F_1, \ldots, F_t, F$ in the columns. Then this submatrix of $S_P$, which we will call $S_F'$, has the form 
$$ S_F' = \left( \begin{array}{cc} S_F & {\bf 0} \\ * & \alpha \end{array} \right)$$ 
with $\alpha > 0$.  By Proposition~\ref{prop:extending rank}, the psd rank of $S_F'$ is at least $n+1$ since the psd rank of $S_F$ is at least $n$. Hence, $\rankpsd P = \rankpsd S_P \geq n+1$. 

Suppose there is now a $\PSD^{n+1}$-factorization of $S_P$ and therefore of $S_F'$. By Proposition~\ref{prop:extending rank} the factor 
corresponding to the facet $F$ has rank one. Repeating the procedure for all facets $F$ and all submatrices $S_F'$ we get that all factors corresponding to the facets of $P$ in this $\PSD^{n+1}$-factorization of $S_P$ must have rank one. To prove that all factors indexed by the vertices of $P$ also have rank one, recall that the transpose of a slack matrix of $P$ is (up to row scaling) a slack matrix of the polar polytope $P^\circ$, concluding the proof.
\end{proof}

\begin{remark} \label{rmk:zero pattern}
The zero pattern in $S_P$ has been used to provide lower bounds for $\rankplus P$ (see for instance, \cite{Yannakakis, FKPT}). 
We note that the zero pattern of a slack matrix by itself is not enough to improve the lower bound on psd rank given in Proposition~\ref{prop:lower bound on psd rank}.  For example, consider the slack matrix $S_k$ of a k-gon in $\RR^2$.  Then $\rankpsd S_k$ grows to infinity as $k$ goes to infinity as shown in \cite{GPT2}.  The Hadamard square $S_k^2$, however, has the same zero pattern as $S_k$ and $\rankpsd S_k^2 \leq \rank S_k = 3$ by Lemma~\ref{lem:rank of slack matrix}.
\end{remark}

\begin{example} The {\em Birkhoff polytope} $B(n)$ is the convex hull of all $n \times n$ permutation matrices. 
It was shown in \cite{FKPT} that $\rankplus B(n) = n^2$ when $n \geq 5$. By Proposition~\ref{prop:lower bound on psd rank}, $\rankpsd B(n) \geq n^2-2n+2$. The {\em permutahedron} $\Pi(n)$ is the convex hull of the vectors 
$(\pi(1), \ldots, \pi(n))$ where $\pi$ is a permutation on $n$ letters. It was shown in \cite{Goemans} that $\rankplus \Pi(n) = O(n \textup{ log } n )$. By Proposition~\ref{prop:lower bound on psd rank}, $\rankpsd \Pi(n) \geq n$.
\end{example}

\begin{theorem} \label{thm:psdrank n+1} 
If $P \subset \RR^n$ is a full-dimensional polytope, then  $\rankpsd P = n+1$ if and only if 
$\sqrtrank S_P = n+1$.
\end{theorem}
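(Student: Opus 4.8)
The plan is to prove the two implications separately. One direction is essentially already in hand: if $\sqrtrank S_P = n+1$, then Proposition~\ref{prop:psd rank and Hadamard square roots} gives $\rankpsd S_P \le \sqrtrank S_P = n+1$, while Proposition~\ref{prop:lower bound on psd rank} gives $\rankpsd S_P \ge n+1$; combining these with $\rankpsd P = \rankpsd S_P$ yields $\rankpsd P = n+1$. (Note that a Hadamard square root cannot have rank smaller than $\tfrac12(\rank S_P)^{1/2}$-type bounds are not needed here; what matters is that by Lemma~\ref{lem:rank of slack matrix}, $\rank S_P = n+1$, so $\sqrtrank S_P \ge \lceil \sqrt{\,}\,\rceil$ considerations only tell us $\sqrtrank S_P$ could a priori be anything from a small number up to $n+1$, but in fact $\sqrtrank S_P \ge \rank S_P$? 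No --- square roots can have \emph{smaller} rank, as Example~\ref{ex:rank 3 and psd rank 2} shows $\sqrtrank$ equals $\rank$ there, but the derangement example shows it can drop. So the quantity $\sqrtrank S_P$ genuinely varies and the hypothesis $\sqrtrank S_P = n+1$ is a real constraint only in the hard direction below.)

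The substantive direction is: if $\rankpsd P = n+1$, then $\sqrtrank S_P = n+1$. Here I would invoke the second half of Proposition~\ref{prop:lower bound on psd rank}, which says that \emph{every} $\PSD^{n+1}$-factorization of $S_P$ uses only rank-one matrices as factors. Fix such a factorization, with rank-one factors $A_i = a_i a_i^T$ for the rows (vertices) and $B_j = b_j b_j^T$ for the columns (facets), where $a_i, b_j \in \RR^{n+1}$. Then $(S_P)_{ij} = \langle a_i a_i^T, b_j b_j^T \rangle = \langle a_i, b_j \rangle^2$, so the matrix $N$ with entries $N_{ij} = \langle a_i, b_j \rangle$ is a Hadamard square root of $S_P$; and $N$ factors as the product of the $v \times (n+1)$ matrix with rows $a_i^T$ and the $(n+1) \times f$ matrix with columns $b_j$, hence $\rank N \le n+1$. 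This shows $\sqrtrank S_P \le n+1$. For the reverse inequality, $\sqrtrank S_P \ge \rank S_P$ is \emph{false} in general, so instead I argue: any Hadamard square root $\sqrt{S_P}$ has $(\sqrt{S_P})_{ij}^2 = (S_P)_{ij}$, and squaring entrywise cannot increase rank beyond... no. The clean way is: if $\sqrtrank S_P \le n$, then by Proposition~\ref{prop:psd rank and Hadamard square roots} we would get $\rankpsd P = \rankpsd S_P \le \sqrtrank S_P \le n$, contradicting Proposition~\ref{prop:lower bound on psd rank}. Hence $\sqrtrank S_P \ge n+1$, and combined with the above, $\sqrtrank S_P = n+1$.

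So in fact both inequalities $\sqrtrank S_P \le n+1$ and $\sqrtrank S_P \ge n+1$ follow from the already-established propositions, and the only genuinely new input is the use of the "rank-one factors" clause of Proposition~\ref{prop:lower bound on psd rank} to manufacture a rank-$(n+1)$ Hadamard square root from a $\PSD^{n+1}$-factorization. The main obstacle, and the step to be careful about, is precisely this manufacturing step: one must be sure that when $\rankpsd P = n+1$ the factorization really does consist entirely of rank-one matrices (which is why the second statement of Proposition~\ref{prop:lower bound on psd rank} was proved), so that $\langle A_i, B_j \rangle$ collapses to $\langle a_i, b_j \rangle^2$ and the resulting square-root matrix inherits the rank bound $n+1$ from a genuine rank-$(n+1)$ matrix factorization. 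Everything else is bookkeeping with the inequalities $\rankpsd M \le \sqrtrank M$ and $\rankpsd P \ge n+1$.
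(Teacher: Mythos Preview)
Your proposal is correct and follows essentially the same route as the paper: one direction combines Proposition~\ref{prop:psd rank and Hadamard square roots} with Proposition~\ref{prop:lower bound on psd rank}, and for the other you use the rank-one-factors clause of Proposition~\ref{prop:lower bound on psd rank} to extract a Hadamard square root of rank at most $n+1$ (the paper packages this extraction as Lemma~\ref{lem:rank one factors}), then bounds $\sqrtrank S_P$ from below via $\rankpsd S_P \le \sqrtrank S_P$. The mathematics is sound; only the exposition would benefit from pruning the exploratory asides.
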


\begin{proof}
By Proposition~\ref{prop:psd rank and Hadamard square roots}, $\rankpsd P \leq \sqrtrank S_P$. Therefore, if $\sqrtrank 
S_P  = n+1$, then by Proposition~\ref{prop:lower bound on psd rank}, the psd rank of $P$ is exactly $n+1$.

Conversely, suppose $\rankpsd P = n+1$. Then there exists a $\PSD^{n+1}$-factorization of $S_P$ which, by Proposition~\ref{prop:lower bound on psd rank}, has all factors of rank one. Thus, by Lemma~\ref{lem:rank one factors}, we have $\sqrtrank S_P \leq n+1$.  Since $\sqrtrank$ is bounded below by $\rankpsd$, we must have $\sqrtrank S_P = n+1$.
\end{proof}

Theorem~\ref{thm:psdrank n+1} says that if a full-dimensional polytope $P \subset \RR^n$ has the minimum possible psd rank $n+1$, then there must be a Hadamard square root of $S_P$ of rank $n+1$ that serves as a witness. In the next section we exhibit several classes of $n$-polytopes whose psd rank is $n+1$. 
We now give examples in the plane that show that many of the properties we have derived so far for $n$-polytopes of psd rank $n+1$ fail when psd rank is larger than $n+1$.

\begin{example} \label{ex:ngons}
Consider the pentagon $P$ in $\RR^2$ with vertices $$(0,0), (1,0), (2,1), (1,2), (0,1),$$ and a regular hexagon $H$ in $\RR^2$.  Then we have slack matrices:
\[ S_P = \left[ \begin{array}{ccccc}
0 & 4 & 12 & 4 & 0 \\
0 & 0 & 8 & 8 & 2 \\
2 & 0 & 0 & 8 & 4 \\
4 & 8 & 0 & 0 & 2 \\
2 & 8 & 8 & 0 & 0 \end{array} \right], \; 
S_H = \left[ \begin{array}{cccccc}
0 & 2 & 4 & 4 & 2 & 0 \\
0 & 0 & 2 & 4 & 4 & 2 \\
2 & 0 & 0 & 2 & 4 & 4 \\
4 & 2 & 0 & 0 & 2 & 4 \\
4 & 4 & 2 & 0 & 0 & 2 \\
2 & 4 & 4 & 2 & 0 & 0 \end{array} \right] . \]
Theorem~\ref{thm:polygons of psd rank 3} will show that these polytopes have psd rank at least four which is not the minimum possible in the plane. We make the following observations:
\begin{description}
\item[(i)] $\sqrtrank S_P > \rankpsd P$

This pentagon has psd rank four due to the $\PSD^4$-factorization given by the following matrices (the first five matrices correspond to the rows and the second five to the columns):
\[ \tiny \left[ \begin{array}{rrrr}
3 & 0 & 0 & 0 \\
0 & 1 & 1 & -1 \\
0 & 1 & 1 & -1 \\
0 & -1 & -1 & 1 \end{array} \right],
\left[ \begin{array}{rrrr}
1 & -1 & 0 & 0 \\
-1 & 1 & 0 & 0 \\
0 & 0 & 1 & -1 \\
0 & 0 & -1 & 1 \end{array} \right],
\left[  \begin{array}{rrrr}
1 & 0 & 0 & -1 \\
0 & 1 & -1 & 0 \\
0 & -1 & 1 & 0 \\
-1 & 0 & 0 & 1 \end{array} \right],
\left[ \begin{array}{rrrr}
1 & 1 & 0 & 0 \\
1 & 1 & 0 & 0 \\
0 & 0 & 1 & 1 \\
0 & 0 & 1 & 1 \end{array} \right],\]

\[ \tiny \left[ \begin{array}{rrrr}
1 & 0 & 0 & 1 \\
0 & 1 & 1 & 0 \\
0 & 1 & 1 & 0 \\
1 & 0 & 0 & 1 \end{array} \right], 
\left[ \begin{array}{rrrr}
0 & 0 & 0 & 0 \\
0 & 0 & 0 & 0 \\
0 & 0 & 1 & 1 \\
0 & 0 & 1 & 1 \end{array} \right],
\left[ \begin{array}{rrrr}
1 & 1 & 1 & 1 \\
1 & 1 & 1 & 1 \\
1 & 1 & 1 & 1 \\
1 & 1 & 1 & 1 \end{array} \right],
\left[ \begin{array}{rrrr}
1 & -1 & -1 & 1 \\
-1 & 1 & 1 & -1 \\
-1 & 1 & 1 & -1 \\
1 & -1 & -1 & 1 \end{array} \right], \]

\[ \tiny \left[ \begin{array}{rrrr}
1 & -1 & 1 & -1 \\
-1 & 1 & -1 & 1 \\
1 & -1 & 1 & -1 \\
-1 & 1 & -1 & 1 \end{array} \right],
\left[ \begin{array}{rrrr}
0 & 0 & 0 & 0 \\
0 & 1 & -1 & 0 \\
0 & -1 & 1 & 0 \\
0 & 0 & 0 & 0 \end{array} \right] . \]
One can check that $\sqrtrank S_P = 5$ in this case via the following algebraic calculation.  Create a symbolic matrix with the same zeros as a $S_P$, say 
\[ S :=  \left[ \begin{array}{ccccc}
0 & a & b & c & 0 \\
0 & 0 & d & e & f \\
g & 0 & 0 & h & i \\
j & k & 0 & 0 & l \\
m & n & o & 0 & 0 \end{array} \right]. \]
Then there is a Hadamard square root of $S_P$ of rank at most four if and only if there is a solution to the system of polynomial equations $$ \{ \textup{det}(S) = 0, \,\, a^2 = 4, \,\,b^2= 12,\,\,c^2=4, \ldots, o^2 = 8 \}.$$  Using a computer algebra package such as Macaulay2 \cite{M2}, we can see that this system of equations has no solutions. Therefore, when the psd rank of a $n$-polytope is greater than $n+1$, there need not be any Hadamard square root of the slack matrix whose rank equals the psd rank of the polytope.

\item[(ii)] $\sqrtrank S_H < \rank \sqrt[+]{S_H}$

The all-nonnegative Hadamard square root $\sqrt[+]{S_H}$ has rank $5$.  The following Hadamard square root has rank 4:
\[ \left[ \begin{array}{rrrrrr}
0 & \sqrt{2} & 2 & 2 & \sqrt{2} & 0 \\
0 & 0 & \sqrt{2} & 2 & 2 & \sqrt{2} \\
\sqrt{2} & 0 & 0 & \sqrt{2} & 2 & 2 \\
-2 & -\sqrt{2} & 0 & 0 & \sqrt{2} & 2 \\
2 & -2 & -\sqrt{2} & 0 & 0 & \sqrt{2} \\
\sqrt{2} & 2 & -2 & -\sqrt{2} & 0 & 0 \end{array} \right]. \]
Thus, it is not enough to check the positive Hadamard square root of $S_P$ to get 
$\sqrtrank S_P$.

\item[(iii)] 
Recall that if $Q$ is an $n$-dimensional polytope and $\rankpsd Q = n+1$, then $\rankpsd Q = \sqrtrank Q$ and all $\PSD^{n+1}$-factorizations of $S_Q$ have factors of rank one. However, even if $\rankpsd Q = \sqrtrank Q$, but $\rankpsd Q > n+1$, then there can be factorizations of $S_Q$ by psd matrices of size $\rankpsd Q$ in which the factors do not all have rank one as in the case of the hexagon $H$.

From above, $\sqrtrank S_H = 4$.  A $\PSD^4$-factorization of $S_H$ is gotten by assigning the following six psd matrices of rank two to the columns:
\[ \tiny \left[ \begin{array}{rrrr}
1 & -1 & 0 & 1 \\
-1 & 1 & 0 & -1 \\
0 & 0 & 1 & 0 \\
1 & -1 & 0 & 1 \end{array} \right],
\left[ \begin{array}{rrrr}
1 & 0 & 0 & 0 \\
0 & 1 & 1 & -1 \\
0 & 1 & 1 & -1 \\
0 & -1 & -1 & 1 \end{array} \right],
\left[ \begin{array}{rrrr}
1 & 1 & 1 & 0 \\
1 & 1 & 1 & 0 \\
1 & 1 & 1 & 0 \\
0 & 0 & 0 & 1 \end{array} \right], \]

\[ \tiny \left[ \begin{array}{rrrr}
1 & 1 & 0 & 1 \\
1 & 1 & 0 & 1 \\
0 & 0 & 1 & 0 \\
1 & 1 & 0 & 1 \end{array} \right],
\left[ \begin{array}{rrrr}
1 & 0 & 0 & 0 \\
0 & 1 & -1 & 1 \\
0 & -1 & 1 & -1 \\
0 & 1 & -1 & 1 \end{array} \right],
\left[ \begin{array}{rrrrr}
1 & -1 & 1 & 0 \\
-1 & 1 & -1 & 0 \\
1 & -1 & 1 & 0 \\
0 & 0 & 0 & 1 \end{array} \right],  \]

and the following six psd matrices of rank one to the rows:
\[ \tiny \left[ \begin{array}{rrrr}
1 & 1 & 0 & 0 \\
1 & 1 & 0 & 0 \\
0 & 0 & 0 & 0 \\
0 & 0 & 0 & 0 \end{array} \right],
\left[ \begin{array}{rrrr}
0 & 0 & 0 & 0 \\
0 & 1 & 0 & 1 \\
0 & 0 & 0 & 0 \\
0 & 1 & 0 & 1 \end{array} \right],
\left[ \begin{array}{rrrr}
0 & 0 & 0 & 0 \\
0 & 1 & -1 & 0 \\
0 & -1 & 1 & 0 \\
0 & 0 & 0 & 0 \end{array} \right], \]

\[ \tiny \left[ \begin{array}{rrrr}
1 & -1 & 0 & 0 \\
-1 & 1 & 0 & 0 \\
0 & 0 & 0 & 0 \\
0 & 0 & 0 & 0 \end{array} \right],
\left[ \begin{array}{rrrr}
0 & 0 & 0 & 0 \\
0 & 1 & 0 & -1 \\
0 & 0 & 0 & 0 \\
0 & -1 & 0 & 1 \end{array} \right],
\left[ \begin{array}{rrrr}
0 & 0 & 0 & 0 \\
0 & 1 & 1 & 0 \\
0 & 1 & 1 & 0 \\
0 & 0 & 0 & 0 \end{array} \right] . \]
\end{description}
\end{example}

There is no systematic algorithm to find exact psd factorizations of the type shown above. The factorizations in the above example were obtained via trial and error with a pen and paper.  We always tried to choose row factors of rank one and all factors as sparse as possible.

We now give two applications of Propositions~\ref{prop:extending rank} and \ref{prop:lower bound on psd rank}. The first yields a method to produce polytopes of psd rank $k$ from polytopes of psd rank $k-1$.

\begin{proposition} \label{prop:pyramid}
If $P \subset \RR^n$ is an $n$-dimensional pyramid over a $(n-1)$-polytope $Q$ and $\rankpsd Q = k$, then $\rankpsd 
P = k+1$.
\end{proposition}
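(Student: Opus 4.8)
The plan is to relate a slack matrix of the pyramid $P$ to a slack matrix of its base $Q$ and then invoke Proposition~\ref{prop:extending rank} (together with Proposition~\ref{prop:lower bound on psd rank} for the dimension bookkeeping). Let $Q \subset \RR^{n-1}$ have vertices $q_1,\dots,q_v$ and facet inequalities $\langle a_j,x\rangle \le \beta_j$ for $j=1,\dots,f$, so that $S_Q$ has $(i,j)$-entry $\beta_j - \langle a_j, q_i\rangle$. Realize $P$ as the convex hull of $Q \times \{0\} \subset \RR^n$ together with an apex point $p$ with last coordinate $1$. The facets of $P$ are of two kinds: the base $Q$ itself, and, for each facet of $Q$, the facet of $P$ obtained by taking the convex hull of that facet with the apex. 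The vertices of $P$ are $q_1,\dots,q_v$ (lying on the base) and the apex $p$ (which lies on none of the lateral facets but not on the base).

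First I would write down $S_P$ in block form under this vertex/facet ordering, putting the $v$ base vertices first and the apex last among rows, and the $f$ lateral facets first and the base facet last among columns. The key structural observations are: (1) the slack of base vertex $q_i$ against lateral facet $F_j$ equals (up to a fixed positive scaling of that column's inequality, which is harmless for $K$-factorizations) the slack $\beta_j - \langle a_j, q_i\rangle$, since the lateral facet's inequality restricted to the hyperplane $\{x_n = 0\}$ is exactly the facet inequality of $Q$; (2) every base vertex has slack $0$ against the base facet; (3) the apex has strictly positive slack against the base facet. Hence, after the harmless column rescalings, $S_P$ has the form
\[
S_P = \begin{pmatrix} S_Q & \mathbf{0} \\ w & \alpha \end{pmatrix}
\]
with $w \in \RR_+^{f}$ and $\alpha > 0$, exactly the shape in Proposition~\ref{prop:extending rank}. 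Since $\rankpsd P = \rankpsd S_P$ and $\rankpsd Q = \rankpsd S_Q = k$, that proposition immediately gives $\rankpsd P = k+1$.

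The main thing to be careful about — the only real obstacle — is point (1): verifying that the lateral-facet-versus-base-vertex block of $S_P$ is genuinely a positive column-rescaling of $S_Q$, and that the remaining two blocks are zero and strictly-positive as claimed. This requires choosing explicit defining inequalities for the lateral facets: if $\langle a_j,x\rangle \le \beta_j$ defines a facet of $Q$, then the affine hull of that facet together with the apex $p=(p',1)$ is a hyperplane in $\RR^n$, and one must check its inequality can be written with the same $(a_j,\beta_j)$ in the first $n-1$ coordinates (the $x_n$-coefficient being determined by passing through $p$), so that it vanishes on the base vertices precisely according to $S_Q$ and is positive at $p$. One should also note the degenerate-looking issue that $P$ being a pyramid forces these to be genuine facets (no two lateral facets coincide, etc.), which holds since $Q$ is $(n-1)$-dimensional. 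Once the block form is established, the argument is a one-line appeal to Proposition~\ref{prop:extending rank}; alternatively, one can bypass the explicit inequalities and instead argue abstractly that deleting the apex row and base-facet column from $S_P$, then restricting attention to the lateral facets, recovers a slack matrix of $Q$ up to row/column scalings, which again suffices since such scalings do not affect psd rank.
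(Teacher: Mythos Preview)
Your approach is essentially the paper's: put $S_P$ in the block form of Proposition~\ref{prop:extending rank} and apply that proposition. One slip worth correcting: the apex lies on \emph{every} lateral facet (you yourself defined those facets as the convex hull of a facet of $Q$ with the apex), so the bottom-left block $w$ is actually $\mathbf{0}$, not a general nonnegative vector---this is exactly the form the paper writes down. Since Proposition~\ref{prop:extending rank} allows arbitrary $w \in \RR_+^f$, your argument is unaffected, and your careful verification that the upper-left block really is $S_Q$ (via the explicit lateral-facet inequality $\beta_j - \langle a_j,x'\rangle - (\beta_j - \langle a_j,p'\rangle)x_n \ge 0$) is more thorough than what the paper spells out.
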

 
\begin{proof}
Let $S_Q$ be the slack matrix of $Q$. By assumption, $\rankpsd S_Q = k$. We may assume without loss of generality that $Q$ lies in the hyperplane $x_n = 0$ and that the apex $v$ of $P$ has $v_n > 0$. The facets of $P$ that contain $v$ are in bijection with the facets of $Q$. The only other facet inequality of $P$ is $x_n \geq 0$. A slack matrix of $P$ is 
 $$ \left[ 
 \begin{array}{cc}
 S_Q & {\bf 0} \\ 
 {\bf 0} & \alpha \end{array}
  \right] $$
  where the last row is indexed by $v$ and the last column by $x_n \geq 0$. Therefore, $\alpha > 0$ and by Proposition~\ref{prop:extending rank}, the psd rank of $S_P$ is $k+1$.
\end{proof}

The following result will be used in Section~\ref{sec:examples}.

\begin{proposition} \label{prop:face psd rank}
If a polytope $P$ has a facet of psd rank $k$, then $P$ has psd rank at least $k+1$. In particular, if $\rankpsd P = n+1$ where $P \subset \RR^n$ is a $n$-polytope, then $\rankpsd F = i+1$ for every $i$-dimensional face of $P$.
\end{proposition}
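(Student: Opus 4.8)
The plan is to reuse, almost verbatim, the submatrix construction from the proof of Proposition~\ref{prop:lower bound on psd rank}. Let $F$ be a facet of $P$ with $\rankpsd F = \rankpsd S_F = k$, corresponding to the facet inequality $\langle a_1, x \rangle \leq \beta_1$ of $P$, so that $F = P \cap \{ x : \langle a_1, x \rangle = \beta_1 \}$. First I would order the vertices $p_1, \dots, p_v$ of $P$ so that $p_1, \dots, p_s$ are exactly the vertices of $F$ and $p_{s+1} \notin F$ (possible since $F \neq P$), and let $N$ be the $(s+1) \times f$ submatrix of $S_P$ on these rows and all columns, with the column indexed by $F$ moved to the last position. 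Then $N$ has the block shape $\left( \begin{smallmatrix} \tilde S_F & {\bf 0} \\ * & \alpha \end{smallmatrix} \right)$, where $\alpha = \beta_1 - \langle a_1, p_{s+1} \rangle > 0$ and the ${\bf 0}$ column appears because $p_1, \dots, p_s$ saturate the inequality defining $F$.

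The step I expect to be the main obstacle is to verify that the top-left block $\tilde S_F$ is a slack matrix of $F$ in the generalized sense (redundant inequalities allowed), so that $\rankpsd \tilde S_F = \rankpsd S_F = k$ by the remarks following Definition~\ref{def:slack matrix}. To see this, I would restrict each inequality $\langle a_j, x \rangle \leq \beta_j$, $j = 2, \dots, f$, to the affine hull of $F$: all of them are valid for $F$, and the $(i,j)$ entry of $\tilde S_F$ is precisely the slack of $p_i$ against the $j$-th restricted inequality. What remains is to check that these restricted inequalities still cut out $F$ inside its affine hull, which holds because every facet of $F$ is a ridge of $P$, hence of the form $F \cap F_j$ for a (unique) facet $F_j \neq F$ of $P$, and the corresponding $\langle a_j, x \rangle \leq \beta_j$ restricts to a facet-defining inequality of $F$; the remaining restricted inequalities are merely redundant for $F$, which does not affect psd rank.

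Granting this, Proposition~\ref{prop:extending rank} applies to $N$ and yields $\rankpsd N = k+1$. Since any $\PSD^m$-factorization of $S_P$ restricts, by selecting the relevant row and column factors, to a $\PSD^m$-factorization of the submatrix $N$, we conclude $\rankpsd P = \rankpsd S_P \geq \rankpsd N = k+1$, which is the first assertion.

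For the ``in particular'' statement, fix an $i$-dimensional face $F$ of $P$ and choose a flag of faces $F = G_i \subset G_{i+1} \subset \cdots \subset G_{n-1} \subset G_n = P$ with $\dim G_j = j$ (such a saturated chain exists since the face lattice of a polytope is graded); each $G_j$ is then a facet of $G_{j+1}$. Applying the first assertion step by step along the flag gives $n+1 = \rankpsd P \geq \rankpsd G_{n-1} + 1 \geq \cdots \geq \rankpsd F + (n-i)$, hence $\rankpsd F \leq i+1$. The reverse inequality $\rankpsd F \geq i+1$ is Proposition~\ref{prop:lower bound on psd rank} applied to the $i$-dimensional polytope $F$, and therefore $\rankpsd F = i+1$.
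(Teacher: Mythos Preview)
Your proof is correct and follows essentially the same approach as the paper: construct the block submatrix $\left(\begin{smallmatrix} S_F & {\bf 0}\\ * & \alpha\end{smallmatrix}\right)$ inside $S_P$ and apply Proposition~\ref{prop:extending rank}, then descend along a flag of faces for the second assertion. The only cosmetic difference is that you keep \emph{all} columns of $S_P$ (so your top-left block is a generalized slack matrix of $F$ with redundant inequalities, and you must invoke the remark after Definition~\ref{def:slack matrix}), whereas the paper keeps only the columns indexed by the facets $F_j$ of $P$ that meet $F$ in a facet of $F$, so that the top-left block is literally a slack matrix $S_F$; your explicit flag argument also spells out what the paper leaves as a one-line citation of Proposition~\ref{prop:lower bound on psd rank}.
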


\begin{proof}
The first fact is an immediate consequence of the proof of Proposition~\ref{prop:lower bound on psd rank} where we saw that if $F$ is a facet of psd rank $k$, then Proposition~\ref{prop:extending rank} can be used to construct a submatrix $S_F'$ of the slack matrix $S_P$ that has psd rank at least $k+1$. 
The second statement then follows from Proposition~\ref{prop:lower bound on psd rank}. 
\end{proof}

\section{ Families of polytopes of minimum psd rank} \label{sec:min psd rank}
\label{sec:examples}

Recall that if $P$ is an $n$-dimensional polytope in $\RR^n$ then $\rankplus P \geq n+1$. It is straightforward to see that the only $n$-dimensional polytopes of nonnegative rank $n+1$ are simplices. The psd situation is much richer with many more classes of polytopes achieving the minimum possible psd rank as we show in this section.

\begin{definition} A $n$-dimensional polytope $P \subset \RR^n$ is said to be $2$-{\em level} if it has a slack matrix all of whose entries are zero or one. Geometrically, $P$ is $2$-level if and only if for each facet of the polytope, all vertices of $P$ lie on the union of this facet and exactly one other parallel translate of the hyperplane spanning this facet.
\end{definition}

It follows from \cite{GPT1} that a $2$-level polytope in $\RR^n$ admits an $\PSD^{n+1}$-lift which can be constructed explicitly using sums of squares polynomials. 
In the language of the current paper, it follows that $n$-dimensional $2$-level polytopes have psd rank $n+1$. We can also see this directly from Theorem~\ref{thm:psdrank n+1}.

\begin{corollary} \label{cor:2level} 
Let $P$ be an $n$-dimensional $2$-level polytope in $\RR^n$. Then the psd rank of $P$ is exactly $n+1$.
Further, all the factors in any $\PSD^{n+1}$-factorization of $P$ have rank one.
\end{corollary}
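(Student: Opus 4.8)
The plan is to derive this directly from Theorem~\ref{thm:psdrank n+1}, which reduces the claim $\rankpsd P = n+1$ to showing that $\sqrtrank S_P = n+1$. Since $P$ is $2$-level, I would fix $S_P$ to be a slack matrix all of whose entries are $0$ or $1$; the point that deserves a sentence of care is that this is harmless, because all slack matrices of $P$ differ by positive column scalings, and such scalings affect neither $\rank$ nor $\sqrtrank$ (scaling column $j$ by $c_j > 0$ rescales column $j$ of any Hadamard square root by $\sqrt{c_j}$, hence preserves its rank).

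For the upper bound $\sqrtrank S_P \le n+1$, the key observation is that for a $0/1$ matrix the all-nonnegative Hadamard square root is the matrix itself, i.e. $\sqrt[+]{S_P} = S_P$. Therefore $\sqrtrank S_P \le \rank \sqrt[+]{S_P} = \rank S_P$, and by Lemma~\ref{lem:rank of slack matrix} the latter equals $n+1$. This is exactly the ``in particular'' clause of Proposition~\ref{prop:psd rank and Hadamard square roots}, now read for the slack matrix of $P$.

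For the matching lower bound, I would combine the fact that $\sqrtrank$ is bounded below by $\rankpsd$ with Proposition~\ref{prop:lower bound on psd rank}: $\sqrtrank S_P \ge \rankpsd S_P = \rankpsd P \ge n+1$. The two inequalities force $\sqrtrank S_P = n+1$, and Theorem~\ref{thm:psdrank n+1} then gives $\rankpsd P = n+1$. The second assertion, that every $\PSD^{n+1}$-factorization of $S_P$ uses only rank one factors, is immediate once $\rankpsd P = n+1$ is known: it is precisely the furthermore clause of Proposition~\ref{prop:lower bound on psd rank}.

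There is essentially no genuine obstacle here — the corollary is a bookkeeping consequence of the machinery already assembled, the only substantive input being the trivial identity $\sqrt[+]{S_P}=S_P$ for a $0/1$ matrix. One could alternatively invoke the explicit sum-of-squares $\PSD^{n+1}$-lift of a $2$-level polytope from \cite{GPT1}, but routing the argument through Theorem~\ref{thm:psdrank n+1} is both shorter and delivers the rank one conclusion for free.
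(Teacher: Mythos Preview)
Your proposal is correct and follows essentially the same route as the paper: observe that $\sqrt[+]{S_P}=S_P$ for a $0/1$ slack matrix so that $\sqrtrank S_P\le \rank S_P=n+1$, combine with the lower bound from Proposition~\ref{prop:lower bound on psd rank}, and invoke Theorem~\ref{thm:psdrank n+1} and the rank-one clause of Proposition~\ref{prop:lower bound on psd rank}. You are simply a bit more explicit than the paper in spelling out the lower bound and the harmlessness of column scalings.
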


\begin{proof}
Since a $2$-level polytope has a $0/1$ slack matrix $S_P$, $\rank \sqrt[+]{S_P} = \rank S_P = n+1$. Therefore, $\sqrtrank S_P = n+1$, and by Theorem~\ref{thm:psdrank n+1}, the psd rank of a $2$-level polytope equals $n+1$. The second statement follows from Proposition~\ref{prop:lower bound on psd rank}.
\end{proof}

Since any $n$-polytope with $n+1$ vertices is a simplex which is $2$-level, its psd rank is $n+1$. 
In fact, Theorem~\ref{thm:psdrank n+1} implies the following stronger result.

\begin{theorem} \label{thm:n+2 vertices}
Any full-dimensional polytope in $\RR^n$ with $n+2$ vertices has psd rank $n+1$.
\end{theorem}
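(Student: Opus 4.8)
By Theorem~\ref{thm:psdrank n+1}, it suffices to exhibit a Hadamard square root of a slack matrix $S_P$ of rank $n+1$; since $S_P$ already has rank $n+1$ by Lemma~\ref{lem:rank of slack matrix}, the goal is to show that some choice of signs on the square roots of the entries keeps the rank from dropping. (In fact, since $\sqrtrank S_P \geq \rankpsd P \geq n+1$ always, any Hadamard square root of rank $n+1$ is automatically of minimum rank.) The plan is to use the very special combinatorial structure of an $n$-polytope $P$ with exactly $n+2$ vertices.

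First I would recall the structure theory of such polytopes. A polytope with $v = n+2$ vertices in $\RR^n$ has a one-dimensional space of affine dependences among its vertices, determined by a single Radon partition: the vertex set splits as a disjoint union $V = V_+ \sqcup V_- \sqcup V_0$ where the vertices in $V_+ \cup V_-$ carry the (signed) support of the unique affine dependence. Every facet of $P$ is obtained by deleting either a nonempty subset of $V_+$ (together with the possibility of some choices involving $V_0$) — more precisely, the facets correspond to the minimal subsets whose complement is a face; concretely $P$ is (up to the combinatorial classification) either a simplex with one vertex "split" repeatedly, an iterated pyramid over a free sum of two simplices, etc. The upshot I want to extract is: up to affine equivalence (which preserves $\rankpsd$ and $\sqrtrank$ by the remarks after Definition~\ref{def:slack matrix}), $P$ is a \emph{repeated pyramid} over a polytope with $m+2$ vertices in $\RR^m$ that has no vertices "free," i.e. a direct sum (free join) of two simplices $\Delta_a * \Delta_b$ type base. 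Pyramids are handled by Proposition~\ref{prop:pyramid}: if the base has minimum psd rank then so does the pyramid. So the problem reduces to the "prism-like" base case, a subdirect sum of two simplices.

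For that base case I would write down the slack matrix explicitly. For the polytope that is the convex hull of two simplices glued along their affine spans in complementary position (the $n$-dimensional analogue of a square, octahedron's dual, etc.), the slack matrix has a clean block form, and one can choose signs for the Hadamard square root blockwise: put all-positive square roots in the blocks coming from "one simplex sees the facets of the other" and insert a single sign flip to cancel the rank contribution of the dependence vector. Concretely, I expect to verify that with the right sign pattern the $n+2$ rows of $\sqrt{S_P}$ still span only an $(n+1)$-dimensional space because the unique linear dependence among the rows of $S_P$ (dual to the affine dependence of the vertices) survives the square-rooting after the sign flip — this is exactly the kind of computation carried out by hand for the pentagon and hexagon in Example~\ref{ex:ngons}, but here it closes because there is only \emph{one} dependence to worry about rather than $v - (n+1) \geq 2$.

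The main obstacle I anticipate is organizing the case analysis of the combinatorial types of $n$-polytopes with $n+2$ vertices cleanly enough that the pyramid reduction plus a single base computation covers everything, rather than having to chase signs in several incomparable families. The cleanest route is probably to avoid the classification entirely: observe that any $n$-polytope with $n+2$ vertices has a slack matrix whose columns (facets) are indexed by the cocircuits of the unique non-Radon... — equivalently, show directly that the space of Hadamard square roots of $S_P$ that preserve rank $n+1$ is cut out by a single quadratic condition (coming from the one-dimensional kernel of $S_P^\top$), and that this single equation is always solvable over the reals by an explicit sign choice. If that uniform argument works it replaces the case analysis; if not, fall back on Proposition~\ref{prop:pyramid} to strip pyramids and handle the two-simplex base by the block sign pattern above.
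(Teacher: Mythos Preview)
Your plan circles the right idea --- exploit the one-dimensional row dependence of $S_P$ --- but misses the single observation that makes the argument immediate: in an $n$-polytope with $n+2$ vertices, every facet contains at least $n$ vertices, so each column of $S_P$ has \emph{at most two} nonzero entries. The paper's proof uses only this. Writing the unique row dependence as $\sum_i a_i S_i = 0$ and reading it one column at a time, each column $j$ gives a two-term relation $a_{i_0}(S_{i_0})_j + a_{i_1}(S_{i_1})_j = 0$. Setting $b_i := \mathrm{sgn}(a_i)\sqrt{|a_i|}$, one checks at once that $b_{i_0}\sqrt{(S_{i_0})_j} + b_{i_1}\sqrt{(S_{i_1})_j} = 0$ for every $j$, so $\sum_i b_i (\sqrt[+]{S_P})_i = 0$ and the all-nonnegative square root $\sqrt[+]{S_P}$ already has rank $n+1$. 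No sign flips inside the matrix, no Radon partitions, no case analysis.

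Your pyramid-reduction-plus-base-case route could presumably be completed, but it is substantially heavier than what is needed, and you have not actually carried out the base computation. Your ``uniform'' alternative is much closer in spirit to the paper's argument, but the phrasing ``cut out by a single quadratic condition'' is off: what you need is a nonzero vector in the left kernel of $\sqrt{S_P}$, which imposes $f$ linear conditions on that vector, one per column. The reason those conditions are simultaneously satisfiable --- and by exactly the signed-square-root trick you gesture at --- is the two-nonzeros-per-column fact, which is the piece your sketch is missing.
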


\begin{proof}
Suppose $P$ is a polytope with $n+2$ vertices.  Then if $f$ is the number of facets of $P$, we have that $S_P$ is an $(n+2) \times f$ matrix of rank $n+1$.  Let $S_i$ denote the $i$th row of $S_P$.  Since $\rank S_P = n+1$, we have $\sum_{i=1}^{n+2}a_iS_i = \left( 0, \ldots,0 \right)$ for some $a_i \in \RR$.  Each column of $S_P$ must have at least $n$ zeros, so when we consider the above equation component-wise, all but at most two of the summands must be zero.  Thus, for each $j = 1, \ldots, f$, $a_{i_0}\left(S_{i_0}\right)_j +a_{i_1}\left(S_{i_1}\right)_j = 0$ for some $1 \leq i_0, i_1 \leq n+2$.  For each $a_i$ define $b_i := \text{sgn}\left(a_i\right)\sqrt{\left| a_i\right|}$.  Then $b_{i_0}\sqrt{\left(S_{i_0}\right)_j} + b_{i_1}\sqrt{\left(S_{i_1}\right)_j} = 0$.  Since this holds for each component, we have $\sum_{i=1}^{n+2}b_i\sqrt{S_i} = \left( 0, \ldots,0 \right)$.  Thus, $\sqrt[+]{S_P}$ must have rank $n+1$ and the result follows from Theorem~\ref{thm:psdrank n+1}.
\end{proof}

There are $\lfloor n^2/4 \rfloor$ distinct combinatorial types of $n$-dimensional polytopes with $n+2$ vertices \cite{Grunbaum}. In the plane, we get that all quadrilaterals have psd rank three.
In $\RR^3$, the two combinatorial types of polytopes with five vertices are the pyramid over a quadrilateral and a double simplex (bipyramid over a triangle). A quadrilateral pyramid need not be $2$-level but it is combinatorially equivalent to a pyramid over a square which is $2$-level.  By Theorem~\ref{thm:n+2 vertices}, a $n$-dimensional double simplex (bipyramid over a simplex of dimension $n-1$) has psd rank $n+1$. They are polytopes of minimum psd rank that are not combinatorially equivalent to $2$-level polytopes.

\begin{proposition} \label{prop:double simplex}
There is no $2$-level polytope that is combinatorially equivalent to a double simplex except in the plane.
\end{proposition}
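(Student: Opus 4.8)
The plan is to combinatorially classify the face structure of a double simplex and show it is incompatible with the rigid geometric condition defining 2-level polytopes, except when $n=2$. Recall that the $n$-dimensional double simplex is the bipyramid over an $(n-1)$-simplex $\Delta$: it has two apices $u$ and $w$ lying on opposite sides of the affine hyperplane spanned by $\Delta$, and its $n+2$ vertices are $u$, $w$, and the $n$ vertices of $\Delta$. Its facets come in two families: each facet is the convex hull of one apex together with a facet of $\Delta$, so there are $2n$ facets, each a simplex spanned by one apex and $n-1$ of the vertices of $\Delta$. The first step is to fix coordinates: place $\Delta$ in the hyperplane $x_n=0$, with $u=(0,\ldots,0,1)$ and $w=(0,\ldots,0,-1)$ the apices, and $\Delta$ with vertex set $v_1,\ldots,v_n$ spanning $x_n = 0$.

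The heart of the argument is the geometric characterization of 2-levelness: for each facet $G$ of $P$, all vertices of $P$ must lie in $G$'s hyperplane or in exactly one parallel translate of it. I would pick a facet $G = \mathrm{conv}(u, v_2,\ldots,v_n)$ — the apex $u$ together with a facet $\{v_2,\ldots,v_n\}$ of $\Delta$ (which omits $v_1$). The vertices \emph{not} on $G$ are precisely $w$ and $v_1$. The 2-level condition forces $w$ and $v_1$ to lie on the \emph{same} translate of $\mathrm{aff}(G)$, i.e. the affine functional $\ell_G$ cutting out $G$ must satisfy $\ell_G(w) = \ell_G(v_1)$. Now I vary the choice of which vertex of $\Delta$ to omit: for each $i$, the facet $G_i = \mathrm{conv}(u, \{v_j : j\ne i\})$ gives the constraint $\ell_{G_i}(w) = \ell_{G_i}(v_i)$, and symmetrically using the apex $w$ gives facets $G_i'$ with $\ell_{G_i'}(u) = \ell_{G_i'}(v_i)$. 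The plan is to translate these equalities into explicit linear conditions on the coordinates of $v_1,\ldots,v_n$ (after using the affine freedom to normalize, e.g. setting the centroid of $\Delta$ at the origin and using the two apices to fix scaling), and to show that for $n \geq 3$ the resulting system is overdetermined and inconsistent — intuitively because a simplex $\Delta$ has no nontrivial symmetry that simultaneously makes \emph{all} of its vertex-to-opposite-facet relations parallel in the required way, whereas a segment ($n-1=1$, i.e. $n=2$) trivially does, which is exactly why quadrilaterals (the 2D double simplices) can be 2-level.

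A cleaner route, which I would try first, is to argue via the induced face structure rather than coordinates: if $P$ is 2-level, then every face of $P$ is 2-level, and in particular every facet $G_i \cong$ an $(n-1)$-simplex must be 2-level (automatic), but more usefully the \emph{vertex figures} are constrained. Alternatively, invoke Proposition~\ref{prop:face psd rank}-style reasoning in reverse: 2-levelness passes to faces and to the polar. The polar of a double simplex is a prism over a simplex (the $n$-dimensional analogue of the triangular prism / the product $\Delta_{n-1} \times [0,1]$ combinatorially). So it suffices to show that no 2-level polytope is combinatorially a simplicial prism except in dimension $2$ — and here I would use the known fact that the only 2-level simplices are... wait, that doesn't quite close it either, so I'd likely combine the polarity reduction with a direct check.

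\textbf{Main obstacle.} The real difficulty is the core inconsistency argument: showing that the system of parallelism constraints $\ell_{G_i}(w)=\ell_{G_i}(v_i)$ (and the $w$-apex analogues) has no solution for $n\ge 3$ while every constraint individually is satisfiable. This requires understanding how the facet functionals $\ell_{G_i}$ depend on the $v_i$'s — each $\ell_{G_i}$ is (up to scaling) the affine functional vanishing on $u$ and on $v_j, j\ne i$, so it involves a determinant/cofactor expression in the $v_j$ coordinates — and then showing the $n$ equations force a degeneracy (collinearity or coincidence of the $v_i$, contradicting that $\Delta$ is a genuine simplex). I expect the bookkeeping to be lightest if one exploits the affine invariance of 2-levelness (Prop.~2 of \cite{GPT2}) to move $\Delta$ to the standard simplex first, so that the $\ell_{G_i}$ become the coordinate functionals, and then the only remaining freedom is the placement of the two apices; at that point the constraints become transparently incompatible for $n\ge 3$. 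The $n=2$ case must be exhibited separately (any square works) to show the exception is genuine.
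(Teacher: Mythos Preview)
Your geometric approach is sound in outline and can be made to work, but it is not the paper's route, and your alternative polarity route contains an actual error.

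\textbf{On the polarity detour.} Two claims there are false. First, $2$-levelness does \emph{not} pass to the polar: if $S_P$ is $0/1$ then $S_P^T$ is $0/1$, but $S_P^T$ need not be the slack matrix of any polytope --- the polar's slack matrix is a \emph{row-rescaled} $S_P^T$, and that rescaling can destroy the two-value property. Second, your target statement ``no $2$-level polytope is combinatorially a simplicial prism except in dimension $2$'' is simply wrong: $\Delta_{n-1}\times[0,1]$ is $2$-level in every dimension (products of $2$-level polytopes are $2$-level). So this branch should be abandoned entirely.

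\textbf{On your main approach versus the paper's.} Your coordinate plan does succeed if you push it through: after normalizing $v_1,\dots,v_{n-1},v_n,u$ to $e_1,\dots,e_{n-1},0,e_n$, the $u$-facet constraints force $w=(1,\dots,1,-(n-1))$, and then the $2$-level condition on the facet $\mathrm{conv}(w,e_1,\dots,e_{n-1})$ evaluated at $0$ and $e_n$ gives $n-1=1$. But you stopped short of this computation, which is the entire content of the proof. The paper bypasses coordinates with a one-line linear-algebra argument: a $2$-level realization would have slack matrix exactly the $0/1$ support pattern $M$, and any slack matrix factorizes as $VF$ with $V$ having an all-ones first column; since $\rank M = n+1$, the one-dimensional left kernel of $V$ must equal the left kernel of $M$, which is spanned by $(1,-1,\dots,-1,1)$. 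Dotting this with the all-ones column of $V$ gives $2-n$, forcing $n=2$. This is both shorter and more conceptual than the facet-by-facet computation, and avoids any worry about how much affine normalization is legitimate.
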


\begin{proof} Let $P \subset \RR^n$ be an $n$-dimensional double simplex. Then the support of any $(n+2) \times 2n$ slack matrix of $P$ where the first and last rows correspond to the vertices acquired when taking the bipyramid over a $(n-1)$-dimensional simplex is
$$M := \left( \begin{array}{ccc|ccc}
0 & \cdots & 0 & 1 & \cdots & 1 \\
\hline
& I_n & & & I_n & \\
\hline
1 & \cdots & 1& 0 & \cdots & 0 
\end{array}
\right).$$
The rank of $M$ is $n+1$ and hence the left kernel of $M$ has dimension one and is generated by the vector $z := (1,-1,-1,\ldots,-1,-1,1) \in \RR^{n+2}$ with all entries equal to $-1$ except the first and last. Also, $P$ is combinatorially equivalent to a $2$-level polytope if and only if there is a ($2$-level) polytope with slack matrix $M$.

Suppose $M$ is the slack matrix of a $n$-dimensional polytope. Then we should be able to factorize $M$ as in the proof of 
Lemma~\ref{lem:rank of slack matrix} into the form 

$$ M = \left( \begin{array}{ll} 
1 & p_1 \\
\vdots & \vdots \\
1 & p_{n+2} \end{array} \right) 
\left( \begin{array}{ccc}
\beta_1 & \cdots & \beta_f\\
- a_1 & \cdots & - a_{2n}
\end{array} \right).$$

Call the two factors $V$ and $F$. The left kernel of $V$ is non-trivial since $V$ is a $(n+2) \times (n+1)$ matrix. 
Let $z'$ be a non-zero element in the left kernel of $V$. Then since $z'VF = 0$, it must also be that $z'M = 0$. This implies that $z'$ is a scalar multiple of $z$ and hence $z$ is in the left kernel of $V$. But looking at the first column of $V$, which is all ones, we see that $z$ can be in the left kernel of $V$ only if $n=2$.
\end{proof}

On the other hand, being combinatorially equivalent to a $2$-level polytope does not imply minimal psd rank. The regular octahedron in $\RR^3$ is a $2$-level polytope but we now show an octahedron whose psd rank is five. 

\begin{example} \label{ex:octahedra}
Consider the octahedron with vertices $$(0,0,0),(2,0,0),(0,2,0),(2,2,0),(1,1,-1),(1,2,1)$$ which has 
slack matrix:
\[ \left[ \begin{array}{cccccccc}
0 & 0 & 0 & 0 & 2 & 2 & 2 & 2 \\
0 & 2 & 0 & 2 & 0 & 0 & 2 & 2 \\
2 & 0 & 2 & 0 & 2 & 2 & 0 & 0 \\
2 & 2 & 2 & 2 & 0 & 0 & 0 & 0 \\
0 & 2 & 3 & 0 & 0 & 2 & 1 & 0 \\
3 & 0 & 0 & 2 & 2 & 0 & 0 & 1 \\ \end{array} \right] . \]
It can be checked algebraically as in Example~\ref{ex:ngons} that no Hadamard square root of this slack matrix has rank four. However, the positive Hadamard square root has rank five and hence the psd rank of this octahedron is five. 
\end{example}

\begin{remark}
We have seen that having the combinatorial type of a $2$-level polytope is not enough for minimal psd rank, while being the image under a projective transformation of a $2$-level polytope is enough. 
Proposition~\ref{prop:double simplex} shows that not all polytopes of minimal psd rank are projectively equivalent to $2$-level polytopes. Strictly weaker than being projectively equivalent to a $2$-level polytope is the existence of a positive scaling of each row
and column of $S_P$ that turns it into a $0/1$-matrix. This clearly implies minimal psd rank, and includes double simplices. So one could suppose this to be a necessary and sufficient condition for having
minimal psd rank. This turns out to be false. Consider the prism with vertices $(0,0,0)$, $(1,0,0)$, $(0,1,0)$, $(1,2,0)$, $(0,0,1)$, $(1,0,1)$, $(0,1,1)$, $(1,2,1)$ which has slack matrix
\[ \left[ \begin{array}{cccccc}
0 & 0 & 2 & 1 & 0 & 1 \\
1 & 0 & 0 & 2 & 0 & 1 \\
0 & 1 & 2 & 0 & 0 & 1 \\
1 & 2 & 0 & 0 & 0 & 1 \\
0 & 0 & 2 & 1 & 1 & 0 \\
1 & 0 & 0 & 2 & 1 & 0 \\
0 & 1 & 2 & 0 & 1 & 0 \\
1 & 2 & 0 & 0 & 1 & 0 \end{array} \right] . \]
The positive square root of this matrix has rank four, so the polytope has minimal psd rank, but it is easy to see that we can never turn the submatrix from the first two rows and the fourth and sixth columns into a $0/1$-matrix
by any scaling.
\end{remark}

In the plane we can fully characterize the polytopes of psd rank three. 

\begin{theorem} \label{thm:polygons of psd rank 3}
A convex polygon $P$ in the plane has psd rank three if and only if it has at most four vertices.
\end{theorem}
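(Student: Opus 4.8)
The plan is to prove the two directions separately. For the easy direction, suppose $P$ has at most four vertices. Triangles are simplices, hence $2$-level, so they have psd rank three by Corollary~\ref{cor:2level} (or even more elementarily, $\rankpsd P \leq \sqrtrank S_P = \rank S_P = 3$). Quadrilaterals are covered directly by Theorem~\ref{thm:n+2 vertices}: any full-dimensional polytope in $\RR^2$ with $4 = 2+2$ vertices has psd rank $3$. So this direction needs essentially no new work beyond citing those results, and I would dispatch it in one or two sentences.

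The real content is the converse: if $P$ is a convex polygon with $v \geq 5$ vertices, then $\rankpsd P \geq 4$. By Proposition~\ref{prop:lower bound on psd rank} we have $\rankpsd P \geq 3$, so what must be ruled out is $\rankpsd P = 3$. Suppose for contradiction that $\rankpsd P = 3$. Then by Proposition~\ref{prop:lower bound on psd rank}, \emph{every} $\PSD^3$-factorization of $S_P$ uses only rank-one factors; equivalently, by Lemma~\ref{lem:rank one factors} and Theorem~\ref{thm:psdrank n+1}, there is a Hadamard square root $\sqrt{S_P}$ with $\rank \sqrt{S_P} = 3$. The goal is to show such a rank-three Hadamard square root cannot exist once $v \geq 5$. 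The key structural feature of a polygon's slack matrix is its zero pattern: if the vertices $p_1,\dots,p_v$ and facets (edges) $F_1,\dots,F_v$ are listed in cyclic order, then $(S_P)_{ij} = 0$ exactly when vertex $p_i$ lies on edge $F_j$, i.e. $(S_P)_{i,i} = (S_P)_{i,i-1} = 0$ (indices mod $v$) and all other entries are strictly positive. Any Hadamard square root $\widehat{S}$ inherits exactly this zero/nonzero pattern. The plan is then to show: a $v \times v$ matrix with this circulant-type zero pattern and all other entries nonzero must have rank $\geq 4$ when $v \geq 5$.

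To prove that rank statement I would argue by extracting small submatrices. First reduce to $v=5$: the $5 \times 5$ submatrix of $\widehat S$ on rows $\{1,2,3,4,5\}$ and columns $\{1,2,3,4,5\}$ (relabeling so these are five consecutive vertices and edges) has the pattern of the pentagon slack matrix, with zeros exactly on its diagonal and subdiagonal (cyclically, but within this $5\times 5$ block the cyclic wraparound entries $(S)_{1,5}$ etc. are the generic nonzero positions) — more carefully, I want a $4 \times 4$ or $5\times 5$ window whose zero pattern forces rank $4$. Concretely, take rows $\{p_1,p_2,p_3,p_4\}$ and columns $\{F_2,F_3,F_4,F_1\}$: in cyclic order the zero entries are $(S_P)_{i,i}$ and $(S_P)_{i,i-1}$, so this $4\times 4$ block has a specific pattern of four zeros; compute its determinant symbolically and show it cannot vanish while all the free entries are nonzero. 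If a single $4\times4$ window does not suffice I would instead show directly that any $4$-dimensional kernel vector of the full $v\times v$ pattern matrix leads to a contradiction with the cyclic nonzero structure, or simply invoke the symbolic computation (as the authors do in Example~\ref{ex:ngons}) applied to the \emph{pattern} rather than to specific numbers: set up a $5\times 5$ symbolic matrix with the pentagon zero pattern, impose rank $\leq 4$, and verify over $\RR$ that no assignment of nonzero reals works — but I'd prefer a clean linear-algebra argument.

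The main obstacle is exactly this last linear-algebra lemma: showing that the cyclic zero pattern with two zeros per row in consecutive positions forces rank at least four for $v \geq 5$. The subtlety is that the statement is false for $v=4$ (quadrilaterals have rank-three Hadamard square roots — indeed their slack matrices already have rank three), so the argument must genuinely use $v \geq 5$, presumably by locating a $4\times4$ minor whose zero positions are "spread out" in a way that is impossible when $v=4$ but forced when $v\geq 5$. I expect the cleanest route is: pick four consecutive vertices $p_1,\dots,p_4$ and the four edges $F_5(=F_v\text{ if }v=5),F_2,F_3,F_4$ — chosen so that within this $4\times 4$ block no row has two zeros — then show its determinant is a nonzero polynomial in the entries that cannot vanish for positive values because of sign reasons (each monomial in the determinant expansion has a fixed sign, so the determinant is a sum of terms of the same sign, hence nonzero). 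Verifying the sign-definiteness of that determinant expansion is the one genuinely fiddly computation, but it is finite and self-contained.
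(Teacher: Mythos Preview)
Your converse direction contains a genuine gap. Your plan is to prove the purely combinatorial statement that any $v\times v$ real matrix with the cyclic pentagon zero pattern (zeros exactly at positions $(i,i)$ and $(i,i-1)$, all other entries nonzero) has rank at least $4$ once $v\ge 5$. This statement is \emph{false}. For instance, take $v_j=(1,\cos(2\pi j/5),\sin(2\pi j/5))\in\RR^3$ and $u_i=v_i\times v_{i-1}$; then the $5\times 5$ matrix $M_{ij}=\langle u_i,v_j\rangle=\det(v_i,v_{i-1},v_j)$ has rank $3$, has zeros exactly where the pentagon slack matrix does, and all other entries are nonzero. So the zero pattern by itself cannot force rank $\ge 4$; indeed Remark~\ref{rmk:zero pattern} in the paper warns precisely that the zero pattern of a slack matrix never improves the bound of Proposition~\ref{prop:lower bound on psd rank}. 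Your fallback ``sign reasons'' argument for a $4\times 4$ minor also collapses immediately: the entries of a Hadamard square root are $\pm\sqrt{(S_P)_{ij}}$, not positive numbers, so the monomials in the determinant expansion do not have a common sign.

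What you are missing, and what the paper exploits, is the second constraint: the Hadamard \emph{square} of $\widehat S$ is $S_P$ itself, which has rank exactly $3$ by Lemma~\ref{lem:rank of slack matrix}. The paper places the polygon in a normalized position (vertices at $(0,0),(1,0),(0,1)$ with two further neighbours $(a,b),(c,d)$), writes out a $4\times 4$ block of $S_P$ explicitly in these coordinates, and then asks when a Hadamard square root of that block can have rank $3$. Because the actual slack entries obey algebraic relations coming from $\rank S_P=3$, the resulting constraint on the last column reduces to an equation of the form $\pm\sqrt{\alpha-\beta}=\pm\sqrt{\alpha}\pm\sqrt{\beta}$ with $\alpha,\beta>0$, which forces $\alpha=\beta$ and hence collinearity of three of the chosen vertices---a contradiction. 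The essential point is that the argument uses the \emph{values} of the slack entries (equivalently, the rank constraint on $S_P$), not merely their support.
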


\begin{proof}
The ``if'' direction was discussed after Theorem~\ref{thm:n+2 vertices}.

Now suppose that $P$ is a convex polygon with $5$ or more vertices. By an affine transformation we can suppose $P$ has facets given by $x \geq 0$ and $y \geq 0$ with vertices on
$(0,0)$, $(1,0)$ and $(0,1)$. Let $(a,b)$ be the vertex sharing an edge with $(0,1)$ and $(c,d)$ the one sharing an edge with $(1,0)$.
These facets are then given by the two inequalities $(b-1)x - ay + a \geq 0$ and $(c-1)y-dx + d \geq 0$ respectively, so we can take the $5 \times 4$ submatrix of the slack matrix of $P$ indexed by these vertices and facets, which is then
\[S'_P=\left(\begin{array}{cccc}
0 & 0 & a & d \\
0 & 1 & 0 & d+c-1 \\
1 & 0 & a+b-1 & 0  \\
a & b & 0 & cb-b-da+d \\
c & d & bc-c-ad+a & 0 \end{array}\right). \]
It is then enough to show that every possible Hadamard square root of the $4 \times 4$ upper left portion of this matrix has rank four. This matrix is given by 
\[\left(\begin{array}{cccc}
0 & 0 & \pm\sqrt{a} & \pm\sqrt{d} \\
0 & \pm 1 & 0 & \pm\sqrt{d+c-1} \\
\pm 1 & 0 & \pm\sqrt{a+b-1} & 0  \\
\pm\sqrt{a} & \pm\sqrt{b} & 0 & \pm\sqrt{cb-b-da+d} \end{array}\right).\]
Assume this matrix has rank three.  Since the first three rows are independent, we can write the fourth row as a combination of the first three.  In such a combination, the coefficients for the first three rows must be $\pm\sqrt{a+b-1}$, $\pm\sqrt{b}$ and $\pm\sqrt{a}$, respectively. For ease of notation, let $\alpha=b(d+c-1)$ and $\beta=d(a+b-1)$.  Then $\alpha,\beta > 0$ and $\alpha \geq \beta$.  Looking at the last column, we see that
\[ \pm\sqrt{\alpha-\beta}=\pm\sqrt{\alpha}\pm\sqrt{\beta} .\]
Out of these eight possible equations, the only four that are feasible are $\pm \sqrt{\alpha-\beta}=\sqrt{\alpha}-\sqrt{\beta}$ and $\pm \sqrt{\alpha-\beta}=-\sqrt{\alpha}+\sqrt{\beta}$, all of which imply $\alpha=\beta$.  Hence, $cb-b=ad-d$ and we have that $b/(a-1)=d/(c-1)$.  Thus, the slope of the line between $(a,b)$ and $(1,0)$ equals the slope between $(c,d)$ and $(1,0)$, implying that the three are collinear and cannot all be vertices unless $(a,b)=(c,d)$.
\end{proof}

In $\RR^3$, it is more difficult to classify the convex polytopes of minimum psd rank.  We have seen that all polytopes with four or five vertices have psd rank four.  Additionally, we can say precisely which octahedra in $\RR^3$ have psd rank four.  Let $O \subset \RR^3$ be a (combinatorial) octahedron.  We say that $O$ is planar with respect to a plane $E$ if $O \cap E$ contains four vertices of $O$.  For example, the regular octahedron is planar to the $xy$, $xz$, and $yz$ planes.  A combinatorial octahedron can be planar with respect to at most three planes.  We say $O$ is \emph{biplanar} if it is planar with respect to at least two distinct planes.

\begin{theorem} \label{thm:octahedra}
An octahedron $O \subset \RR^3$ has psd rank four if and only if $O$ is biplanar.
\end{theorem}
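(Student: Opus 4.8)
The plan is to prove both directions by working with Hadamard square roots of the relevant $6 \times 8$ slack matrix, exactly as in Theorem~\ref{thm:psdrank n+1}. An octahedron has $6$ vertices and $8$ facets, so a slack matrix $S_O$ is $6 \times 8$ of rank $4$; by Theorem~\ref{thm:psdrank n+1}, $\rankpsd O = 4$ if and only if some Hadamard square root of $S_O$ has rank $4$. For the ``if'' direction, I would first normalize by an affine transformation so that the two distinguishing planes are coordinate planes, say $O \cap \{z = 0\}$ contains four vertices and $O \cap \{y = 0\}$ (or some other coordinate plane) contains four vertices. The combinatorics of the octahedron then forces a very rigid structure on which vertices lie on which planes: in a biplanar octahedron the two ``equatorial'' quadrilaterals share exactly two opposite vertices, and the remaining two vertices are the apexes. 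Writing the vertices and facets explicitly in these coordinates, I would exhibit an explicit sign pattern on the entries of $\sqrt{S_O}$ making the rank drop to $4$ — the natural guess is to keep the positive square root on the rows/columns associated with one hemisphere and flip signs consistently on the other, mimicking the hexagon computation in Example~\ref{ex:ngons}(ii) and the construction used for bipyramids. Concretely, because two facets of $O$ lie in the plane $z=0$ and two in the other coordinate plane, several $2\times 2$ minors of $\sqrt{S_O}$ are forced to vanish, and one checks that a single consistent choice of signs collapses all $5\times 5$ minors.

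For the ``only if'' direction, I would assume $\rankpsd O = 4$ and show $O$ must be biplanar. By Theorem~\ref{thm:psdrank n+1} there is a Hadamard square root $\sqrt{S_O}$ of rank $4$. The key structural input is the zero pattern of $S_O$: in a combinatorial octahedron each facet is a triangle, so each column of $S_O$ has exactly $3$ zeros (the three vertices on that facet), and each vertex lies on $4$ facets, so each row has $4$ zeros. Taking a left-kernel vector $z \in \RR^6$ of $S_O$ (the kernel is $2$-dimensional since $S_O$ has rank $4$), and the analogous relation for $\sqrt{S_O}$ which has a $2$-dimensional left kernel as well, I would translate the rank condition into a system of sign/square-root equations in the coordinates of the vertices, in the spirit of the $4\times 4$ argument at the end of the proof of Theorem~\ref{thm:polygons of psd rank 3}. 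The goal is to show this system has a solution only when two of the equatorial quadrilaterals are actually planar, i.e. when three of the four vertices forced to be coplanar by a kernel relation are genuinely collinear-or-coplanar constraints forcing $O \cap E$ to contain four vertices. I expect this to reduce, after eliminating variables, to an equation of the type $\sqrt{\alpha - \beta} = \pm\sqrt{\alpha} \pm \sqrt{\beta}$ forcing $\alpha = \beta$, which is precisely a planarity (equal-slope / equal-ratio) condition on the vertex coordinates, just as in the polygon proof but now in two independent ``directions,'' yielding the two planes.

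The main obstacle will be the ``only if'' direction: the octahedron's slack matrix is $6 \times 8$ rather than a small square, its $2$-dimensional kernel gives a pencil of linear relations rather than a single one, and there are many facet labelings and sign patterns to manage. I would handle this by first using the affine invariance of psd rank (and of $\sqrtrank S_P$) to fix a normal form for the octahedron — four vertices at $(0,0,0),(1,0,0),(0,1,0),(c,d,0)$ placing one quadrilateral in the plane $z=0$ — so that only the two apex coordinates remain free, and then arguing that if $O$ is not planar to a second plane the resulting Hadamard-square-root system is infeasible. Passing through the polar octahedron (again a combinatorial octahedron, with biplanarity a self-dual condition up to relabeling) may cut the casework roughly in half. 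If a fully coordinate-based elimination proves unwieldy, an alternative is to invoke Proposition~\ref{prop:face psd rank} to control the psd rank of the triangular and quadrilateral faces, but since all proper faces of an octahedron are triangles and edges this only gives $\rankpsd O \geq 4$ and does not by itself force biplanarity, so the square-root computation seems unavoidable.
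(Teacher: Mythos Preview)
Your overall strategy through Theorem~\ref{thm:psdrank n+1} matches the paper's, but the ``only if'' direction has a structural gap. Your proposed normal form---four vertices at $(0,0,0),(1,0,0),(0,1,0),(c,d,0)$ in the plane $z=0$---already \emph{assumes} that $O$ is planar with respect to at least one plane. A generic combinatorial octahedron has no planar equatorial quadrilateral whatsoever, and affine transformations preserve non-coplanarity, so your setup simply cannot address the zero-planarity case, which is the generic situation you must rule out. The paper instead pins four of the six vertices to $(0,0,1),(0,0,0),(1,0,0),(0,1,0)$ (a tetrahedral affine frame that presupposes nothing about planarity), leaves the remaining two vertices $v_4,v_6$ free, extracts a carefully chosen $5\times 5$ submatrix of $S_O$, and then shows---via a Macaulay2 colon-ideal computation $I:J$ rather than a by-hand $\sqrt{\alpha-\beta}=\pm\sqrt{\alpha}\pm\sqrt{\beta}$ reduction---that any Hadamard square root of that submatrix of rank at most four forces at least two of the three planarity equations to hold. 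The elimination here is genuinely heavier than in the pentagon argument, and there is no indication it collapses to a single two-term square-root identity.

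Two smaller points on the ``if'' direction. First, all eight facets of an octahedron are triangles, so no facet lies in $z=0$; the four coplanar vertices form an equatorial $4$-cycle, which is not a face, so your remark about ``two facets of $O$ lying in the plane $z=0$'' and the vanishing $2\times 2$ minors it was meant to produce does not apply. Second, the sign-pattern search is unnecessary: once you normalize with four vertices in $z=0$ and the two apexes above and below, the paper shows by direct row reduction that the \emph{positive} Hadamard square root $\sqrt[+]{S_O}$ already has rank four under either of the two remaining planarity conditions.
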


\begin{proof}
First, assume $O$ is biplanar.  Then, by applying an affine transformation, we can assume that $O$ is planar with respect to the $xy$ plane and has vertices $(0,0,0)$, $(1,0,0)$, $(0,1,0)$, $(a,b,0)$, $(z_1,z_2,z_3)$, and $(w_1,w_2,w_3)$ where $z_3 > 0$, $w_3 < 0$, and $a+b > 1$.  

For ease of notation, let $\alpha = z_3 - w_3$, $\beta = w_1z_3 - z_1w_3$, and $\gamma = w_2z_3 - z_2w_3$.  Then $(0,0,0)$, $(a,b,0)$, $(z_1,z_2,z_3)$, $(w_1,w_2,w_3)$ are coplanar if and only if $b\beta = a\gamma$ and $(1,0,0)$, $(0,1,0)$, $(z_1,z_2,z_3)$, $(w_1,w_2,w_3)$ are coplanar if and only if $\alpha = \beta + \gamma$.  The combinatorics of $O$ dictates that these are the only possible further planarities, and 
since $O$ is biplanar, at least one of these conditions must be satisfied.

Now $O$ has slack matrix $S_O$:
\begin{small}
\[ \left[ \begin{array}{cccccccc}
0 & 0 & b & a & 0 & 0 & b & a \\
1 & 0 & 0 & a+b-1 & 1 & 0 & 0 & a+b-1 \\
0 & 1 & a+b-1 & 0 & 0 & 1 & a+b-1 & 0 \\
a & b & 0 & 0 & a & b & 0 & 0 \\
0 & 0 & 0 & 0 & \frac{-\beta}{w_3} & \frac{-\gamma}{w_3} & \frac{b(\beta-\alpha) + (1-a)\gamma}{w_3} & \frac{a(\gamma-\alpha) + (1-b)\beta}{w_3} \\ 
\frac{\beta}{z_3} & \frac{\gamma}{z_3} & \frac{b(\alpha-\beta) + (a-1)\gamma}{z_3} & \frac{a(\alpha-\gamma) + (b-1)\beta}{z_3} & 0 & 0 & 0 & 0 \end{array} \right] . \]
\end{small}

In the case $b\beta = a\gamma$ or the case $\alpha = \beta + \gamma$, row reduction shows that $\sqrt[+]{S_O}$ has rank four.  Hence, $O$ has psd rank four.

For the converse, suppose $O$ is planar to either one or zero planes.  If a planar condition is satisfied, assume it is by the vertices $v_1,v_2,v_3,v_4$.  By applying an affine transformation, we can assume that $v_1 = (0,0,1)$, $v_2 = (0,0,0)$, $v_3 = (1,0,0)$, and $v_5 = (0,1,0)$.  Let $v_4 = (z_1,z_2,z_3)$ and $v_6 = (w_1,w_2,w_3)$ where we must have 
\begin{equation} \label{eq:octahedron conditions}
z_1 < 0, \,\, w_3 > 0, \,\,1- z_1 - z_2 - z_3 > 0, \textup{ and } 1 - w_1 - w_2 - w_3 > 0
\end{equation}
to preserve the combinatorial structure. (These are not all of the required conditions, but we will use these particular ones below.)

Since $O$ cannot satisfy planarity conditions on the set of vertices $\left\{v_1,v_2,v_5,v_6\right\}$ or  $\left\{v_3,v_4,v_5,v_6\right\}$, we must have that 
\begin{equation} \label{eq:planarity violations}
w_1 \neq 0 \textup{ and } w_1 z_3 + w_2 z_3 - z_1 w_3 - z_2 w_3 + w_3 - z_3 \neq 0.
\end{equation}

We calculate the slack matrix $S_O$ and consider its $5 \times 5$ submatrix $M$ indexed by the vertices $v_1,v_2,v_3,v_5,v_6$ in the rows and the facets $F_{1,3,5}$, $F_{2,3,6}$, $F_{2,4,5}$, $F_{1,3,6}$, $F_{1,4,5}$ in the columns where $F_{i,j,k}$ is the facet defined by the vertices $v_i,v_j,v_k$.  After multiplying the rows and columns by nonnegative constants, $M$ has the form:
\begin{tiny}
\[ \left[ \begin{array}{ccccc}
0 & 1 & 1 & 0 & 0 \\
1 & 0 & 0 & 1 & 1 \\
0 & 0 & z_3 & 0 & 1-z_1-z_2-z_3 \\
0 & w_3 & 0 & 1-w_1-w_2-w_3 & 0 \\
-z_1(1-w_1-w_2-w_3) & 0 & -z_1 w_3 + w_1 z_3 & 0 & -z_1(1-w_2-w_3) + w_1(1-z_2-z_3) \end{array} \right] . \]
\end{tiny}

Now consider an arbitrary Hadamard square root $\sqrt{M}$.  For the purposes of calculating rank of $\sqrt{M}$, we can assume that the $(1,2)$, $(1,3)$, $(2,1)$, $(2,4)$, and $(2,5)$ entries of $\sqrt{M}$ are all $1$.  Let 
\[ S= \left[ \begin{array}{ccccc}
0 & 1 & 1 & 0 & 0 \\
1 & 0 & 0 & 1 & 1 \\
0 & 0 & s_1 & 0 & s_2 \\
0 & s_3 & 0 & s_4 & 0 \\
s_5 & 0 & s_6 & 0 & s_7 \end{array} \right] . \]
be a symbolic matrix corresponding to a $\sqrt{M}$ and let $\tilde{z}_1,\ldots,\tilde{w}_3$ be variables corresponding to $z_1,\ldots,w_3$.  Consider the ideal $I$ generated by the polynomials:
  \[ \left\{ \det S, \,s_1^2 - \tilde{z}_3, \ldots, s_7^2 + \tilde{z}_1(1-\tilde{w}_2-\tilde{w}_3) - \tilde{w}_1(1-\tilde{z}_2-\tilde{z}_3) \right\}. \]
Now if $\rankpsd O = 4$, then $\sqrtrank M \leq 4$ and, hence, there must exist real numbers $x_1,\ldots,x_7$ such that $(x_1,\ldots,x_7,z_1,\ldots,w_3)$ lies in $V(I)$, the variety of $I$.

The three possible planarity conditions on $O$ are given by the equations:
\[
\tilde{w}_1 = 0, \,\,\tilde{z}_2 =0, \textup{ and }   \tilde{w}_1 \tilde{z}_3 + \tilde{w}_2 \tilde{z}_3 - \tilde{z}_1 \tilde{w}_3 - \tilde{z}_2 \tilde{w}_3 + \tilde{w}_3 - \tilde{z}_3 = 0.
\]
Let $J_1, J_2, J_3$ be the ideals generated by two each of the three polynomials defining the above planarity conditions.
Then the product ideal $J := J_1*J_2*J_3$ has variety  $V(J) = V(J_1) \cup V(J_2) \cup V(J_3)$. By our planarity assumption on $O$, $(x_1,\ldots,x_7,z_1,\ldots,w_3)$ is not contained in $V(J)$.  Now $V(I) \backslash (V(J)$ is contained in the variety of the colon ideal $I:J$ \cite[Chapter 4.4, Theorem 7]{CLO} and, hence, $(x_1,\ldots,x_7,z_1,\ldots,w_3)$ vanishes on every polynomial in $I:J$. Using Macaulay2 \cite{M2}, we can compute a set of generators of $I:J$ and by elimination one sees that 
\[ f = \tilde{z}_1\tilde{w}_1\tilde{w}_3(\tilde{w}_1+\tilde{w}_2+\tilde{w}_3 - 1)(\tilde{z}_1+\tilde{z}_2+\tilde{z}_3 - 1)(\tilde{w}_1 \tilde{z}_3 + \tilde{w}_2 \tilde{z}_3 - \tilde{z}_1 \tilde{w}_3 - \tilde{z}_2 \tilde{w}_3 + \tilde{w}_3 - \tilde{z}_3) \]
lies in $I:J$. However, no choice of $z_1,\ldots,w_3$ that is required to satisfy (\ref{eq:octahedron conditions}) and (\ref{eq:planarity violations}) can vanish on $f$.  Hence, we must have $\rankpsd O \geq 5$.
\end{proof}

A {\em cuboid}, or combinatorial cube, is a polytope in $\RR^3$ that is combinatorially equivalent to a cube. Since the polars of cuboids are octahedra and psd rank is preserved under polarity, the cuboids of minimal psd rank are precisely those that are polars of biplanar octahedra.  We call these biplanar cuboids. More explicitly, these are the cuboids for which there exists two sets of four facets whose supporting hyperplanes intersect in a point (possibly at infinity).


We will now argue that there are no polytopes in $\RR^3$ of psd rank four beyond the ones we have considered above (and their polars).
Let $P$ be a polytope in $\RR^3$ of psd rank four. By Proposition~\ref{prop:face psd rank}, all the facets of $P$ must be triangles or quadrilaterals.  Further, since $\rankpsd P^\circ = 4$, each vertex of $P$ must be of {\em degree} three or four.  Recall that the degree of a vertex of $P$ is the number of edges of $P$ incident to that vertex.


\begin{lemma}\label{lem:facet restriction}
Let $P \subset \RR^3$ be a three-dimensional polytope with $\rankpsd P = 4$.  If $p$ is a vertex of $P$ of degree four, then the four facets incident to $p$ must be triangles.
\end{lemma}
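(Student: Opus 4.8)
The plan is to derive a contradiction by extracting from a rank-one psd factorization of $S_P$ an equation of the shape $\pm\sqrt{a}\pm\sqrt{a+c}\pm\sqrt{c}=0$ with $a,c>0$, which has no solution. First I would set up notation at $p$: let $e_1,e_2,e_3,e_4$ be the edges at $p$ in cyclic order, with other endpoints $q_1,q_2,q_3,q_4$, and let $F_i$ be the facet between $e_i$ and $e_{i+1}$ (indices mod $4$), so the facets through $p$ are exactly $F_1,\dots,F_4$. Since $F_i$ contains $e_i$ and $e_{i+1}$, the edge $e_i$ lies on only $F_{i-1}$ and $F_i$, whence $q_i\in F_{i-1}\cap F_i$, while $q_i\notin F_{i+1}\cup F_{i+2}$ (if $q_i$ lay on a third facet $F$ through $p$, then $e_i\subset F$); in particular $S(q_i,F_{i+1}),S(q_i,F_{i+2})>0$. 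Recording these incidences correctly is the step most prone to bookkeeping errors. Because a slack matrix is only defined up to positive column scalings, I may also arrange at the outset that $S(q_1,F_2)=S(q_1,F_3)$ and $S(q_2,F_3)=S(q_2,F_4)$.

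Next I would invoke Proposition~\ref{prop:lower bound on psd rank}: $S_P$ has a $\PSD^4$-factorization with all factors of rank one, i.e. $S(v,F)=\langle a_v,b_F\rangle^2$ with $a_v,b_F\in\RR^4$, so that $s_{v,F}:=\langle a_v,b_F\rangle$ defines a Hadamard square root of $S_P$ with $s_{v,F}=0\iff v\in F$. Since $p$ lies on $F_1,\dots,F_4$, the vectors $b_{F_1},\dots,b_{F_4}$ lie in the three-dimensional space $a_p^{\perp}$ (and $a_p\neq 0$, else $p$ would lie on every facet and $P$ would be a tetrahedron, contradicting $\deg p=4$), so there is a nonzero relation $\sum_i\lambda_i b_{F_i}=0$. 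Pairing it with $a_{q_1}$ annihilates the $F_1$ and $F_4$ terms and, using $S(q_1,F_2)=S(q_1,F_3)$, gives $|\lambda_2|=|\lambda_3|$; pairing with $a_{q_2}$ gives $|\lambda_3|=|\lambda_4|$, and these common values are nonzero. In parallel, the four affine functionals defining $F_1,\dots,F_4$ all vanish at $p$, hence satisfy a nonzero linear relation $\sum_i c_i\, S(\,\cdot\,,F_i)\equiv 0$; evaluating at $q_1$ and $q_2$ (again using the equalized slacks) forces $c_2=c_4=-c_3\neq 0$.

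To finish, suppose for contradiction that some $F_i$, say $F_1$, is not a triangle. Its vertex set contains $p,q_1,q_2$ (the neighbours of $p$ along $F_1$), hence a further vertex $r$; as $F_1\cap F_2=e_2$ and $F_1\cap F_4=e_1$ are edges whose only vertices of $F_1$ are $p,q_1,q_2$, we have $r\notin F_2\cup F_4$, so $S(r,F_2),S(r,F_4)>0$, while $S(r,F_1)=0$. Pairing $\sum_i\lambda_i b_{F_i}=0$ with $a_r$ and dividing by $|\lambda_2|$ gives, for some choice of signs,
\[ \pm\sqrt{S(r,F_2)}\ \pm\ \sqrt{S(r,F_3)}\ \pm\ \sqrt{S(r,F_4)}\ =\ 0, \]
whereas $\sum_i c_i\, S(r,F_i)=0$ with $S(r,F_1)=0$ and $c_2=c_4=-c_3$ gives $S(r,F_3)=S(r,F_2)+S(r,F_4)$. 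Setting $a=S(r,F_2)>0$ and $c=S(r,F_4)>0$, the displayed equation reads $\pm\sqrt{a}\pm\sqrt{a+c}\pm\sqrt{c}=0$; isolating $\sqrt{a+c}$ and squaring forces $\sqrt{ac}=0$, a contradiction. Hence $F_1$ is a triangle, and by the cyclic symmetry of the argument so are $F_2,F_3,F_4$.

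The main obstacle is conceptual rather than computational: one must keep straight that two distinct linear dependencies are in play and are used for different purposes — the dependence among the psd factors $b_{F_i}$ produces the \emph{square-root} identity at $r$, while the dependence among the facet functionals produces the \emph{additive} identity $S(r,F_3)=S(r,F_2)+S(r,F_4)$ — and that the simultaneous column normalization $S(q_1,F_2)=S(q_1,F_3)$, $S(q_2,F_3)=S(q_2,F_4)$ is exactly what makes the $|\lambda_i|$ equal and the $c_i$ equal up to sign, so that these two identities become incompatible.
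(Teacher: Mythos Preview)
Your proof is correct and follows essentially the same strategy as the paper's: both arguments produce two linear relations on the data attached to the four facets through $p$---one coming from the rank-four slack matrix $S_P$ and one from a rank-four Hadamard square root---normalize so that the relevant coefficients have equal absolute value, and then evaluate at an extra vertex $r$ of a would-be non-triangular facet to obtain an impossible square-root identity. The only organizational difference is that the paper works with \emph{row} relations (writing the row of a fifth vertex $p_4$ as a combination of the rows of $p,p_1,p_2,p_3$ in both $S_P$ and $\sqrt{S_P}$, obtaining $d+ae=abc$ and $d^2+(ae)^2=(abc)^2$, hence $ade=0$), whereas you work with the dual \emph{column} relations (the $b_{F_i}$ all lie in the $3$-space $a_p^\perp$, and the facet functionals all lie in the $3$-space of affine forms vanishing at $p$); your resulting contradiction $\pm\sqrt{a}\pm\sqrt{a+c}\pm\sqrt{c}=0$ with $a,c>0$ is the same identity in different notation.
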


\begin{proof}
Let $P$ and $p$ be as above and suppose that the four facets incident to $p$ are not all triangles.  By 
Proposition~\ref{prop:face psd rank}, one of the facets surrounding $p$ must be a quadrilateral and $P$ contains the following structure (with $p_1,\ldots,p_5$ vertices of $P$):

\begin{center}
\begin{tikzpicture}
  [scale=.3,auto=center,every node/.style={circle,fill=blue!20}]
  \node (p) at (6,6) {$p$};
  \node (p1) at (1,11)  {$p_1$};
  \node (p2) at (1,1)  {$p_2$};
  \node (p3) at (11,1) {$p_3$};
  \node (p4) at (16,6)  {$p_4$};
  \node (p5) at (11,11)  {$p_5$};
\foreach \from/\to in {p/p1,p/p2,p/p3,p/p5,p3/p4,p4/p5}
    \draw (\from) -- (\to);
\end{tikzpicture}
\end{center}

Let $S_P$ be a slack matrix of $P$.  Then $S_P$ is of rank four.  Further, since $P$ has minimum psd rank, there exists a Hadamard square root $\sqrt{S_P}$ of rank four. Let $M$ be the $5 \times 4$ submatrix of $\sqrt{S_P}$ indexed by $p, p_1,p_2,p_3, p_4$ in the rows and by the four facets incident to $p$ in the columns.  By scaling the columns of $\sqrt{S_P}$ by nonzero scalars, we may assume that $M$ is of the following form, with $a,b,c,d,e$ nonzero:

\[ \left[\begin{array}{cccc}
0 & 0 & 0 & 0 \\
0 & 1 & 0 & 1 \\
0 & 0 & 1 & a \\
1 & 0 & b & 0 \\
c & d & e & 0 \end{array} \right]. \]

The four rows of $\sqrt{S_P}$ and $S_P$ corresponding to the first four rows of $M$ are linearly independent by the structure of $M$.  Hence, we can write the row of $\sqrt{S_P}$ and $S_P$ corresponding to the fifth row of $M$ as a linear combination of the other four.  Thus, we can write the fifth row of $M$ and $M^2$ as a linear combination of the first four.  This results in two necessary equations: $d + ae = abc$ and $d^2 + a^2e^2 = (abc)^2$, which implies that $ade=0$, a contradiction.
\end{proof}

\begin{proposition}\label{prop:3d classification}
A polytope in $\RR^3$ of psd rank four has the combinatorial type of a simplex, quadrilateral pyramid, bisimplex, triangular prism, octahedron, or cube.
\end{proposition}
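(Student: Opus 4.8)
The plan is to reduce the statement to a short finite combinatorial enumeration via Euler's relation, and then to kill the spurious cases using Lemma~\ref{lem:facet restriction} and its polar dual. Write $V_3, V_4$ for the numbers of vertices of $P$ of degree three and four, and $F_3, F_4$ for the numbers of triangular and quadrilateral facets; by Proposition~\ref{prop:face psd rank} together with Theorem~\ref{thm:polygons of psd rank 3} (and the fact that every vertex of $P$ has degree three or four, applied to $P^\circ$) these exhaust all vertices and facets. Lemma~\ref{lem:facet restriction} says a degree-four vertex is incident only to triangles; applying it to $P^\circ$, whose psd rank is also four since psd rank is invariant under polarity, and translating back, \emph{every quadrilateral facet of $P$ has all four of its vertices of degree three}. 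These two dual statements are the only geometric input; everything after is combinatorics.

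First I would extract the numerical constraints. Since in a $3$-polytope each vertex is incident to as many facets as edges, $\sum_v(4-\deg v)+\sum_f(4-|f|)=4(V-E+F)=8$, which here reads $V_3+F_3=8$. Counting edge--facet and edge--vertex incidences gives $2E=3V_3+4V_4=3F_3+4F_4$, and combining with $F_3=8-V_3$ yields $V_4=6-\tfrac32 V_3+F_4$; in particular $V_3$ is even, so $(V_3,F_3)\in\{(0,8),(2,6),(4,4),(6,2),(8,0)\}$. Now invoke the dual lemmas: if $V_4\ge 1$ then some degree-four vertex is incident to four triangular facets, so $F_3\ge 4$; dually, if $F_4\ge 1$ then $V_3\ge 4$. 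Hence in the cases $(0,8)$ and $(2,6)$ one must have $F_4=0$ (so $P$ is simplicial), in the cases $(8,0)$ and $(6,2)$ one must have $V_4=0$ (so $P$ is simple), and only $(4,4)$ can carry both a degree-four vertex and a quadrilateral facet. In that case I would further bound $V_4$: if $v_1,\dots,v_k$ are degree-four vertices with $k\ge 2$, each is incident to all four triangular facets (there being exactly four), so all of $v_1,\dots,v_k$ lie on every triangular facet; since a triangle has only three vertices, $k\le 3$, and for $k\ge 2$ a fixed edge $v_iv_j$ then lies on all four triangular facets, contradicting that an edge lies on exactly two facets. So $V_4\le 1$ in the $(4,4)$ case.

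Finally I would identify the combinatorial type in each surviving case. For $(0,8)$ with $F_4=0$: a simplicial $3$-polytope all of whose vertices have degree four satisfies $E=2V$ and $F=\tfrac43 V$, forcing $V=6$, and one checks directly (fixing a vertex, whose link is a $4$-cycle, and chasing the remaining vertex) that it is the octahedron. For $(2,6)$ with $F_4=0$: a simplicial polytope with $V=5$ and degree sequence $(3,3,4,4,4)$, the triangular bipyramid (bisimplex), the unique simplicial $3$-polytope on five vertices \cite{Grunbaum}. For $(8,0)$ with $V_4=0$: a simple polytope with six quadrilateral facets, whose polar falls under the octahedron case, hence $P$ is a cube. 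For $(6,2)$ with $V_4=0$: a simple polytope with two triangular and three quadrilateral facets, whose polar is the bisimplex, hence $P$ is a triangular prism. For $(4,4)$: if $V_4=0$ then $V=F=4$ and $P$ is a simplex; if $V_4=1$ then $F_4=1$, the unique degree-four vertex is incident to all four triangles, its vertex figure is a $4$-cycle on the remaining four vertices, and the single quadrilateral facet is bounded by that cycle, so $P$ is a pyramid over a quadrilateral. This exhausts the list. I expect the combinatorial incidence arguments — bounding $V_4$ in the $(4,4)$ case and pinning down the small polytopes from their $f$-vectors — to be the only delicate points; the remainder is bookkeeping with Euler's formula and the two dual lemmas.
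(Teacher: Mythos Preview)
Your proof is correct and shares the same skeleton as the paper's: both derive $V_3+F_3=8$ from Euler's relation, split into the five cases $(V_3,F_3)\in\{(0,8),(2,6),(4,4),(6,2),(8,0)\}$, and use Lemma~\ref{lem:facet restriction} to finish. The execution differs in a way worth noting. You explicitly invoke the polar dual of Lemma~\ref{lem:facet restriction} (a quadrilateral facet has only degree-three vertices) to force $F_4=0$ in the $(0,8)$ and $(2,6)$ cases and $V_4=0$ in the $(8,0)$ and $(6,2)$ cases; this immediately reduces $(2,6)$ to ``the unique simplicial $3$-polytope on five vertices'', whereas the paper handles $(2,6)$ by a longer configuration-chasing argument about which neighbors of a degree-four vertex can themselves have degree four. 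Likewise, in the $(4,4)$ case your incidence argument---each degree-four vertex must lie on all four triangles, so two of them would share an edge lying on four facets---is a cleaner way to obtain $V_4\le 1$ than the paper's direct check. The paper's route is slightly more hands-on and pictorial; yours is more systematic and buys shorter case analyses at the cost of stating the dual lemma up front.
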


\begin{proof}
Let $P$ be a polytope in $\RR^3$ of psd rank four with $v$ vertices, $e$ edges, and $f$ facets.  Let $v_t$ and $v_q$ denote the number of vertices of degree three and four in $P$, and let $f_t$ and $f_q$ denote the number of triangular and quadrangular facets of $P$.

By double counting edges, $2e = 3f_t+4f_q$, and by considering $P^\circ$, we also see that $2e=3v_t+4v_q$.  Now using Euler's formula, $v-e+f=2$, it is easy to deduce that $v_t$ and $f_t$ are even and that $v_t+f_t=8$.  Hence, we only need to consider polytopes where $(v_t,f_t)$ equals $(0,8)$, $(2,6)$, $(4,4)$, $(6,2)$, or $(8,0)$. Further, by taking polars we need only consider the cases where $(v_t,f_t)$ equals $(0,8)$, $(2,6)$, or $(4,4)$.

When $(v_t,f_t) = (0,8)$, we have that every vertex is of degree four.  Thus, by Lemma~\ref{lem:facet restriction}, every facet must be triangular.  The only polytope in $\RR^3$ that satisfies these conditions is the octahedron.

Now suppose $(v_t,f_t) = (4,4)$.  If there are no degree four vertices, then there are only four total vertices and the polytope must be the simplex.  If there is a degree four vertex, then by Lemma~\ref{lem:facet restriction} the polytope must contain the following configuration:

\begin{center}
\begin{tikzpicture}
  [scale=.3,auto=center,every node/.style={circle,fill=blue!20}]
  \node (p) at (6,6) {$p$};
  \node (p1) at (1,11)  {$p_1$};
  \node (p2) at (1,1)  {$p_2$};
  \node (p3) at (11,1) {$p_3$};
  \node (p4) at (11,11)  {$p_4$};
  \foreach \from/\to in {p/p1,p/p2,p/p3,p/p4,p3/p4,p4/p1,p1/p2,p2/p3}
    \draw (\from) -- (\to);
\end{tikzpicture}
\end{center}

If vertex $p_1$,$p_2$,$p_3$, or $p_4$ has degree four, then we will be forced to include too many triangular facets.  Thus, they all have degree three and the polytope is a quadrilateral pyramid.

Finally, suppose $(v_t,f_t) = (2,6)$.  Then $P$ must have a degree four vertex (call it $p$) and the configuration above is again included in the boundary complex of $P$ with the four triangles shown being facets of $P$.  Since $P$ has only two vertices of degree three, at least two of the vertices surrounding $p$ must have degree four.  Suppose two adjacent vertices among $p_1,p_2,p_3,p_4$ have degree four. Then each of them must be contained in four triangular facets which means that each such vertex is incident to two triangular facets that are not shown in the figure. But since these degree four adjacent vertices already share a facet, they can share at most one of these four extra triangular facets. This creates a total of seven triangular facets in $P$ contradicting $f_t=6$. Therefore, the two vertices of degree four among $p_1,p_2,p_3,p_4$ must be nonadjacent. As before, each is adjacent to two triangular facets that are not shown and since $f_t=6$, it must be that the two vertices share these two triangular facets. Therefore, $P$ is a bisimplex. 

Now the facts that the polar of a bisimplex is combinatorially a triangular prism, and the polar of an octahedron is a cube  completes the proof.
\end{proof}

We now immediately obtain the following theorem which gives a complete classification of polytopes in $\RR^3$ of psd rank four.

\begin{theorem} \label{thm:3d polytopes of psd rank four}
The polytopes in $\RR^3$ of psd rank four are precisely simplices, quadrilateral pyramids, bisimplicies, combinatorial triangular prisms, biplanar octahedra, and biplanar cuboids.
\end{theorem}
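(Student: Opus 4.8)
The plan is to assemble results already in hand. The combinatorial restriction is exactly Proposition~\ref{prop:3d classification}: any $P \subset \RR^3$ with $\rankpsd P = 4$ is combinatorially a simplex, quadrilateral pyramid, bisimplex, triangular prism, octahedron, or cube. So it remains to decide, within each of these six combinatorial classes, which embeddings achieve psd rank four, and to check that the resulting list of realizable polytopes is precisely the one in the statement.

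First I would dispose of the classes in which \emph{every} member has psd rank four. A simplex is a $2$-level polytope, so $\rankpsd = n+1 = 4$ by Corollary~\ref{cor:2level}. A quadrilateral pyramid and a bisimplex each have $n+2 = 5$ vertices, hence have psd rank $4$ by Theorem~\ref{thm:n+2 vertices}. For a combinatorial triangular prism $P$, I would translate it so the origin lies in its interior --- which does not change the psd rank, a projective invariant by \cite[Prop.~2]{GPT2} --- and then pass to the polar $P^\circ$. The polar of a combinatorial triangular prism is a combinatorial bisimplex (the five vertices of $P^\circ$ are dual to the five facets of $P$, the two triangular facets of $P$ giving the two degree-three vertices and the three quadrilateral facets the three degree-four vertices), so $\rankpsd P = \rankpsd P^\circ = 4$, again by Theorem~\ref{thm:n+2 vertices}.

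Next I would treat the two remaining classes, where the psd rank genuinely depends on the embedding. For octahedra this is exactly Theorem~\ref{thm:octahedra}: $\rankpsd O = 4$ if and only if $O$ is biplanar. For cuboids I would dualize: the polar of a combinatorial octahedron is a combinatorial cuboid, psd rank is invariant under polarity, and the polar of a biplanar octahedron is by definition a biplanar cuboid (one with two sets of four facets whose supporting hyperplanes intersect in a common point, possibly at infinity). Hence a cuboid has psd rank four if and only if it is biplanar. Combining everything, a polytope of one of the six combinatorial types from Proposition~\ref{prop:3d classification} has psd rank four precisely when it is a simplex, a quadrilateral pyramid, a bisimplex, a combinatorial triangular prism, a biplanar octahedron, or a biplanar cuboid, which is the asserted list and gives both directions of the theorem.

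There is essentially no obstacle here: all the substance lives in Proposition~\ref{prop:3d classification}, Theorem~\ref{thm:n+2 vertices}, and Theorem~\ref{thm:octahedra}. The only step requiring a little care is the polarity bookkeeping, namely verifying that the polar of a combinatorial triangular prism is a bisimplex and that "biplanar cuboid," as we defined it, is exactly the polar dual of "biplanar octahedron." Both are routine face-lattice computations.
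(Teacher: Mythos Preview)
Your proposal is correct and matches the paper's approach exactly: the paper states the theorem as an immediate consequence of Proposition~\ref{prop:3d classification}, Theorem~\ref{thm:n+2 vertices}, Theorem~\ref{thm:octahedra}, and polarity (for triangular prisms via bisimplices, and for cuboids via octahedra), without writing out a separate proof. You have simply spelled out the pieces the paper leaves implicit, including the careful handling of polarity for triangular prisms and the definition of biplanar cuboids as polars of biplanar octahedra.
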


A major catalyst for the use of semidefinite programming in combinatorial optimization was 
the {\em Lov{\'a}sz theta body of a graph} \cite{ShannonCapacity, GLS}, denoted as $\textup{TH}(G)$,  which is a convex relaxation of the stable set polytope of a graph. Let $G = ([n],E)$ be a graph with vertex set $[n] := \{1, \ldots, n \}$ and edge set $E$. Recall that a {\em stable set} of $G$ is a subset $S \subseteq [n]$ such that for all $i,j \in S$, the pair $\{i,j\}$ is not in $E$. The {\em characteristic vector} of a stable set $S$ is $\XX^S \in \{0,1\}^n$ defined as $(\XX^S)_i = 1$ if $i \in S$ and $0$ otherwise. The {\em stable set polytope} of $G$ is the $n$-dimensional polytope 
$$ \STAB(G) := \textup{convex hull}( \XX^S \,:\, S \textup{ stable set in } G ) \subset \RR^n,$$
and $\textup{TH}(G)$ is the following projection of an affine slice of $\PSD^{n+1}$: 
$$\left\{ x \in \RR^n \,:\, \exists \left[ \begin{array}{cc} 1 & x^T \\ x & U \end{array} \right] \succeq 0 \textup{ s.t. } U_{ii} = x_i \,\,\forall \,\, i=1,\ldots,n \textup{ and } U_{ij} = 0 \,\,\forall \,\,\{i,j\} \in E \right\}.$$
Further, $\textup{TH}(G) = \STAB(G)$ if and only if $G$ is a {\em perfect graph} \cite[Chapter 9]{GLS}. Hence if $G$ is perfect, 
$\rankpsd \STAB(G) = n+1$ and the description of $\textup{TH}(G)$ gives a $\PSD^{n+1}$-lift of $\STAB(G)$. In the context of this paper, 
it is natural to ask if there are non-perfect graphs for which $\rankpsd \STAB(G) = n+1$, via other $\PSD^{n+1}$-lifts.

\begin{theorem} \label{thm:perfect graphs}
Let $G$ be a graph with $n$ vertices.  Then $\STAB(G)$ has psd rank $n+1$ if and only if $G$ is perfect.
\end{theorem}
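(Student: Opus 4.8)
The plan is to treat the two implications separately; the reverse direction is essentially the classical Lov\'{a}sz theory packaged through the machinery of this paper, while the forward direction carries the real content.

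\emph{If $G$ is perfect.} By the classical description of stable set polytopes of perfect graphs \cite[Chapter 9]{GLS}, the facets of $\STAB(G)$ are exactly the nonnegativity inequalities $x_i \ge 0$ and the clique inequalities $\sum_{i \in Q} x_i \le 1$ over cliques $Q$ of $G$. The slack of a stable set $S$ in $x_i \ge 0$ is $1$ or $0$ according as $i \in S$ or not, and its slack in a clique inequality is $1 - |S \cap Q| \in \{0,1\}$ since $S$ is stable and $Q$ is a clique. Hence $\STAB(G)$ is a $2$-level polytope, and $\rankpsd \STAB(G) = n+1$ by Corollary~\ref{cor:2level}. (Equivalently, $\textup{TH}(G) = \STAB(G)$ for perfect $G$ gives the $\PSD^{n+1}$-lift directly, and $\rankpsd \STAB(G) \ge n+1$ by Proposition~\ref{prop:lower bound on psd rank}.)

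\emph{If $G$ is not perfect.} By the Strong Perfect Graph Theorem, $G$ has an induced odd hole $C_{2k+1}$ or an induced odd antihole $\overline{C_{2k+1}}$ with $k \ge 2$; call it $H$ and set $m := |V(H)| = 2k+1$. Fixing $x_i = 0$ for all coordinates $i \notin V(H)$ exhibits $\STAB(H)$ as an $m$-dimensional face of $\STAB(G)$, namely the intersection of $\STAB(G)$ with the nonnegativity facets indexed by $V(G)\setminus V(H)$. If $\rankpsd \STAB(G)$ were $n+1$, then Proposition~\ref{prop:face psd rank} would force $\rankpsd \STAB(H) = m+1$; so it suffices to prove $\rankpsd \STAB(H) \ge m+2$ whenever $H$ is an odd hole or odd antihole. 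By Theorem~\ref{thm:psdrank n+1} this is equivalent to $\sqrtrank S_{\STAB(H)} \ge m+2$, i.e.\ to the statement that \emph{every} Hadamard square root of $S_{\STAB(H)}$ has rank at least $m+2$, which is strictly larger than $\rank S_{\STAB(H)} = m+1$.

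To prove the latter I would exhibit an explicit $(m+2)\times(m+2)$ submatrix $B$ of $S_{\STAB(H)}$ built from the cyclic data of $H$: rows indexed by the empty stable set, the $2k+1$ singletons, and one two-element stable set; columns indexed by the $2k+1$ ``edge-type'' facets (the $x_i+x_j\le 1$ in the hole case, the maximal-clique inequalities of $\overline{C_{2k+1}}$ in the antihole case), the odd-hole (respectively odd-antihole) facet, and one nonnegativity facet. Then one shows $\det \sqrt{B} \ne 0$ for every assignment of signs to the square roots. As in the proof of Theorem~\ref{thm:polygons of psd rank 3}, writing the ``extra'' rows of $\sqrt{B}$ as combinations of the (independent) singleton rows forces the combining coefficients to be $\pm$ square roots of prescribed slacks, and reading off the edge-facet and odd-facet columns yields a chain of sign equations of the form $\pm\sqrt{\mu}=\pm\sqrt{\mu_1}\pm\cdots\pm\sqrt{\mu_t}$ that traverses the cycle $H$ once; the odd length of $H$ is exactly what makes this chain inconsistent, just as $\pm\sqrt{\alpha-\beta}=\pm\sqrt{\alpha}\pm\sqrt{\beta}$ was forced there to collapse to $\alpha=\beta$. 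It is convenient that scaling rows and columns of a Hadamard square root by $\pm 1$ does not change its rank, so only finitely many sign patterns — the ones inequivalent under such flips — need be examined, with oddness of $H$ separating them into ``balanced'' and ``unbalanced'' classes.

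I expect the main obstacle to be precisely this last step: choosing $B$ so that the sign/determinant analysis is at once uniform in $k$ and transparently infeasible for odd $H$ (it must, and will, fail for even holes, which are perfect), quite possibly organized as an induction on $k$ that removes two consecutive vertices of $H$ at a time, and handling the antihole case by the same template with the clique facets in place of the edge facets. Everything else — the $2$-level description of $\STAB(G)$ for perfect $G$, the reduction to an induced odd hole or antihole via the Strong Perfect Graph Theorem, and the descent to faces through Proposition~\ref{prop:face psd rank} and Theorem~\ref{thm:psdrank n+1} — is routine given the tools already developed.
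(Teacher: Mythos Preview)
Your overall plan is exactly the paper's: the perfect direction via $2$-levelness (or $\textup{TH}(G)$), the non-perfect direction via the Strong Perfect Graph Theorem, the face reduction through Proposition~\ref{prop:face psd rank}, and the appeal to Theorem~\ref{thm:psdrank n+1} to reduce to showing that every Hadamard square root of a well-chosen $(m+2)\times(m+2)$ submatrix has full rank. You even chose the same rows as the paper: the empty stable set, the $2k+1$ singletons, and one two-element stable set.

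The one substantive difference is the column choice, and it dissolves what you flagged as the ``main obstacle.'' Instead of taking all $2k+1$ edge facets together with the odd-cycle facet and one nonnegativity facet, the paper takes \emph{all $2k+1$ nonnegativity facets} together with \emph{one} edge facet $x_1+x_2\le 1$ and the odd-cycle facet $\sum_i x_i\le k$. In any Hadamard square root the first $2k+2$ of these columns are transparently independent (the nonnegativity columns contain an identity block). Writing the last column as $\sum_j\alpha_j(\text{col}_j)$ and reading off only the four rows $\emptyset,\{1\},\{3\},\{1,3\}$ forces
\[
\alpha_1=\pm\sqrt{k},\quad \alpha_2=\pm\sqrt{k-1},\quad \alpha_4=\pm\sqrt{k}\pm\sqrt{k-1},\quad \pm\alpha_2\pm\alpha_4=\pm\sqrt{k-2},
\]
which has no real solution for $k\ge 2$. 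That is the entire sign analysis: four equations, no traversal of the cycle, no induction on $k$. The antihole case follows the same template and is omitted in the paper. So your proposed cycle-long chain of $\pm\sqrt{\cdot}$ identities is not wrong in spirit, but it is avoidable; swapping which facet family supplies the ``identity block'' turns the hard step into a four-line contradiction.
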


\begin{proof}
We saw that $\rankpsd \STAB(G) = n+1$ when $G$ is a perfect graph with $n$ vertices. 
Suppose $G$ is not perfect.  By Proposition~\ref{prop:face psd rank}, it is enough to show that $\STAB(G)$ has a face that is not of minimal psd rank.  By the perfect graph theorem \cite{CRSTAnnals}, $G$ contains a {\em odd hole} or {\em odd anti-hole} $H$.  Since $\STAB(H)$ forms a face of $\STAB(G)$, we just need to show that $\STAB(H)$ is not of minimal psd rank.  

Let $H=([2m+1],E)$ and assume $H$ is an odd hole.  The anti-hole case is exactly analogous and is omitted here.  Now $\STAB(H)$ is a $(2m+1)$-dimensional polytope with facet inequalities: 
\begin{enumerate}
\item $x_i \geq 0$ for each $i \in [2m+1]$
\item $\xx_e \leq 1$ for each $e \in E$
\item $\xx_{[2m+1]} \leq m$
\end{enumerate}
where $\xx_T := \sum_{i \in T} x_i$ for every subset $T$ of $[2m+1]$ and $\xx_e := x_{i} + x_{j}$ for $e=\left\{ i,j \right\} \in E$. Let $S$ be the slack matrix of $\STAB(H)$ and let $S^\prime$ be the $(2m+3) \times (2m+3)$ submatrix of $S$ where $S^\prime$ is indexed by the stable sets \[ \left\{ \; \right\},\left\{ 1 \right\},\left\{ 2 \right\},\ldots,\left\{ 2m+1 \right\},\left\{ 1,3\right\} \] in the rows and the facets $\xx_{\left\{ 1,2\right\}} \leq 1, x_{1} \geq 0, \ldots, x_{2m+1} \geq 0, \xx_{[2m+1]} \leq m$  in the columns.  Then $S^\prime$ has the form:
\[ \left[ \begin{array}{cccccccc}
1 & 0 & 0 & 0 & 0 & \cdots & 0 & m \\
0 & 1 & 0 & 0 & 0 & \cdots & 0 & m-1 \\
0 & 0 & 1 & 0 & 0 & \cdots & 0 & m-1 \\
1 & 0 & 0 & 1 & 0 & \cdots & 0 & m-1 \\
1 & 0 & 0 & 0 & 1 & \cdots & 0 & m-1 \\
\vdots & \vdots & \vdots & \vdots & \vdots & \ddots & \vdots & \vdots \\
1 & 0 & 0 & 0 & 0 & \cdots & 1 & m-1 \\
0 & 1 & 0 & 1 & 0 & \cdots & 0 & m-2 \end{array} \right] . \]

Let $\sqrt{S^\prime}$ be an arbitrary Hadamard square root and suppose that $\rank \sqrt{S^\prime} \leq 2m+2$.  Then since the first $2m+2$ columns are linearly independent, we must have that the final column is a linear combination of the first $2m+2$.  Let $\alpha_1,\ldots,\alpha_{2m+2}$ be coefficients in such a combination.  By looking at the first, second, fourth, and last columns, we see that $\alpha_1 = \pm \sqrt{m}$, $\alpha_2 = \pm \sqrt{m-1}$, and $\alpha_4 = \pm \sqrt{m} \pm \sqrt{m-1}$.  Now by looking at the last row, we must have $\pm \alpha_2 \pm \alpha_4 = \pm \sqrt{m-2}$, which is a contradiction.  Hence, $\sqrtrank S > 2m+2$ and we have that $\STAB(H)$ is not of minimal psd rank.
\end{proof}

\bibliographystyle{plain}

\end{document}